\providecommand{\U}[1]{\protect \rule{.1in}{.1in}}
 \newcommand{\hyper}[5]{\,{}_{#1}F_{#2}\left( 
\begin{array}{r|l}
\begin{array}{cc}{\displaystyle{#3}}\\
{\displaystyle{#4}}
\end{array} & {\displaystyle{#5}}
\end{array} \right)}
\newtheorem{theorem}{Theorem}
\newtheorem{corollary}[theorem]{Corollary}
\theoremstyle{definition}
\theoremstyle{remark}
\newtheorem{remark}{Remark}
\begin{document}
\title[A new family of orthogonal polynomials in three variables]{A new family of orthogonal polynomials in three variables}
\author{Rab\.{I}a Akta\c{S}}
\address{Ankara University, Faculty of Science, Department of Mathematics, 06100, Tando\u{g}an, Ankara, Turkey}
\email{raktas@science.ankara.edu.tr}
\author{Iv\'{a}n Area}
\address{Departamento de Matem\'{a}tica Aplicada II, E.E. Aeron\'{a}utica e do Espazo, Universidade de Vigo, Ourense, Spain}
\email{area@uvigo.gal}
\author{Esra G\"{u}ldo\u{g}an}
\address{Ankara University, Faculty of Science, Department of Mathematics, 06100, Tando\u{g}an, Ankara, Turkey}
\email{eguldogan@ankara.edu.tr}
\subjclass[2000]{Primary 33C50, Secondary 33C45}
\keywords{Jacobi polynomials, Koornwinder polynomials, recurrence relation, partial differential equation, connection relation}

\begin{abstract}
In this paper we introduce a six-parameter generalization of the four-parameter three-variable polynomials on the simplex and we investigate the properties of these polynomials. Sparse recurrence relations are derived by using ladder relations for shifted univariate Jacobi polynomials and bivariate polynomials on the triangle. Via these sparse recurrence relations, second order partial differential equations are presented. Furthermore, some connection relations are obtained between these polynomials. Finally, new results for the four-parameter three-variable polynomials on the simplex are given.
\end{abstract}
\maketitle

\section{Introduction}\label{sec:1}

The classical univariate Jacobi polynomials $P_{n}^{(a,b)}(x)$ defined by the Rodrigues formula
\[
P_{n}^{(a,b)}\left(x\right)  =\frac{\left(-1\right)  ^{n}} {2^{n}n!}
\left(1-x\right)^{-a} \left(  1+x\right)^{-b}\frac{d^{n}}{dx^{n}}\left \{  (1-x)  ^{n+a}\left(  1+x\right)  ^{n+b}\right \}, \qquad n \geq 0,
\]
are orthogonal with respect to the weight function $w_{a,b}(x)=(1-x)^{a}(1+x)^{b}$ on the interval $(-1,1)$ for $a,b>-1$. The Jacobi polynomials on the interval $(0,1)$, also referred as shifted univariate Jacobi polynomials \cite{Luke}, which we denote by $\widetilde{P}_{n}^{(a,b)}(x):=P_{n}^{\left(  a,b\right)  }\left(  2x-1\right)$,
\begin{equation}\label{eq:sjp}
\widetilde{P}_{n}^{\left(  a,b\right)  }(x)  =\frac{\left(-1\right)  ^{n}}{n!}(1-x)  ^{-a}x^{-b}\frac{d^{n}}{dx^{n}}\left \{  (1-x)  ^{n+a}x^{n+b}\right \}, \qquad n \geq 0,
\end{equation}
are orthogonal on the interval $\left(  0,1\right)  $ with respect to the weight function $\widetilde{w}_{a,b}(x)=(1-x)^{a}x^{b}$ where $a,b>-1$. Indeed, we have
\[
\int_{0}^{1}\widetilde{P}_{n}^{(a,b)}(x)\widetilde{P}_{m}^{(a,b)}(x)(1-x)^{a}x^{b}dx=\delta_{m,n}h_{n}^{\left(  a,b \right)  },
\]
where
\begin{equation} \label{1}
h_{n}^{\left(  a,b \right)  }=\frac{\Gamma \left(  a+n+1\right)\Gamma \left(  b+n+1\right)  }{n!\left(  a+b+2n+1\right)\Gamma \left(  a+b+n+1\right)  },
\end{equation}
with $m,n\in \mathbb{N}_{0}:=\mathbb{N\cup}\left \{  0\right \}$, and $\delta_{m,n}$ is the Kronecker delta \cite{R,S}.

In 1975, Koornwinder \cite{K} gave a general method to derive multivariate orthogonal polynomials from univariate orthogonal polynomials. One of the examples of Koornwinder's method is the Jacobi polynomials on the triangle
\begin{multline}\label{A}
P_{n,k}^{\left(  a,b,c\right)  }\left(  x,y\right)  =P_{n-k}^{\left(2k+b+c+1,a\right)  }\left(  2x-1\right)  (1-x)  ^{k}P_{k}^{\left(  c,b\right)  }\left(  \frac{2y}{1-x}-1\right)  \\
 =\widetilde{P}_{n-k}^{\left(  2k+b+c+1,a\right)  }(x)  \left(1-x\right)  ^{k}\widetilde{P}_{k}^{\left(  c,b\right)  }\left(  \frac{y}{1-x}\right), \quad k=0,1,\dots,n, \quad n=0,1, \dots
\end{multline}
which are orthogonal on the two-dimensional simplex
\begin{equation}\label{eq:T2}
{\mathbb{T}}^{2}:=\{(x,y)\in \mathbb{R}^{2}:x,y>0,1-x-y>0\},
\end{equation}
with respect to the weight function $w_{a,b,c} (x,y)=x^{a}y^{b}(1-x-y)^{c}$ for $a,b,c>-1$,
\begin{multline}
\iint_{{\mathbb{T}}^{2}} P_{n,k}^{\left(  a,b,c\right)  }\left(  x,y\right) P_{r,s}^{\left(  a,b,c\right)  }\left(  x,y\right) x^{a}y^{b}(1-x-y)^{c}dxdy \\
 =\frac{\Gamma \left(  a+n-k+1\right)  \Gamma \left(  b+c+n+k+2\right) \Gamma \left(  b+k+1\right)  \Gamma \left(
c+k+1\right)  }{\left(  n-k\right)  !k!\left(  2n+a+b
+c+2\right) \left(  2k+b+c+1\right)  \Gamma \left(
n+k+a+b+c+2\right)  \Gamma \left(  k+b+c+1\right)}
 \delta_{n,r}\delta_{k,s}.
\end{multline}
In the recent paper \cite{OTV}, the authors introduce a bivariate four-parameter variant of the Koornwinder polynomials on the triangle defined by
\begin{equation}\label{B}
P_{n,k}^{(a,b,c,d)}\left(  x,y\right)  =\widetilde{P}_{n-k}^{\left(  2k+b+c+d+1,a\right)  }(x)  (1-x)^{k}\widetilde{P}_{k}^{\left(  c,b\right)  }\left(  \frac{y}{1-x}\right),
\end{equation}
where $a$, $b$, $c$, $d>-1$, $n$ and $k$ are integers satisfying $0\leq k\leq n.$ These polynomials are orthogonal on the two-dimensional simplex ${\mathbb{T}}^{2}$ defined in \eqref{eq:T2} with respect to the weight function
\[
w_{a,b,c,d}(x,y)=x^{a}y^{b}(1-x-y)^{c}(1-x)  ^{d}.
\]
Indeed, it follows
\begin{multline}
\iint_{{\mathbb{T}}^{2}} P_{n,k}^{(a,b,c,d)}\left(  x,y\right) P_{r,s}^{(a,b,c,d)}\left(  x,y\right) x^{a}y^{b}(1-x-y)^{c}(1-x)  ^{d}dxdy=\frac{\Gamma \left(  a+n-k+1\right) }{\left(  n-k\right)  !k!} \\ \times \frac{ \Gamma \left(  b+c+d+n+k+2\right) \Gamma \left(  b+k+1\right)  \Gamma \left(c+k+1\right)  }{\left(  2n+a+b+c+d+2\right) \left(  2k+b+c+1\right)  \Gamma \left(n+k+a+b+c+d+2\right)  \Gamma \left(  k+b+c+1\right)}  \delta_{n,r}\delta_{k,s}.
\end{multline}

The special case $d=0$ gives the bivariate Koornwinder polynomials on the triangle defined in \eqref{A}. The authors obtain sparse recurrence relations by using some differential relations mapping $P_{n,k}^{(a,b,c,d)}\left(  x,y\right)$ to scalar multiple of $P_{\widetilde{n},\widetilde{k}}^{\left(  \widetilde{a},\widetilde{b},\widetilde{c},\widetilde{d}\right)  }\left(  x,y\right)$, where the new parameters in $P_{\widetilde{n},\widetilde{k}}^{\left(  \widetilde{a},\widetilde{b},\widetilde{c},\widetilde{d}\right)  }\left(  x,y\right)$ are $n$, $k$, $a$, $b$, $c$, or $d$, respectively, or these values incremented or decremented by $1$. This approach allows to derive new recurrence relations for the bivariate (classical) Koornwinder polynomials $P_{n,k}^{\left(a,b,c\right)  }\left(  x,y\right)$ (see \cite{OTV}).

Let
\begin{equation}\label{eq:3dsimplex}
{\mathbb{T}}^{3}:=\{(x,y,z)\in \mathbb{R}^{3} :x,y,z>0,1-x-y-z>0\}
\end{equation}
be the three-dimensional simplex.
In {\cite{DX}} it was introduced the following family of three-variable polynomials three-variable polynomials
\begin{multline}\label{C1}
P_{n}^{\left(  \alpha,\beta,\gamma,\delta\right)  }(x,y,z)  :=P_{n_{1},n_{2},n_{3}}^{\left(  \alpha,\beta,\gamma,\delta\right)
}(x,y,z)    =P_{n_{1}}^{\left(  \beta+\gamma+\delta+2n_{2}+2n_{3}+2,\alpha \right)}\left(  2x-1\right)  (1-x)  ^{n_{2}} \\
  \times P_{n_{2}}^{\left(  \gamma+\delta+2n_{3}+1,\beta \right)  }\left(\frac{2y}{1-x}-1\right)  \left(  1-x-y\right)  ^{n_{3}}P_{n_{3}}^{\left(
\delta,\gamma \right)  }\left(  \frac{2z}{1-x-y}-1\right),
\end{multline}
where $n_{1}$, $n_{2}$, $n_{3}$, $n\in \mathbb{N}_{0}$ and $n_{1}+n_{2}+n_{3}=n$. These polynomials are orthogonal on the simplex ${\mathbb{T}}^{3}$  with respect to the weight function
\[
w_{\alpha,\beta,\gamma,\delta}(x,y,z)=x^{\alpha}y^{\beta}z^{\gamma}(1-x-y-z)^{\delta}.
\]

In this paper, we consider a six-parameter variant of the three-variable polynomials \eqref{C1} defined as
\begin{multline}\label{C}
P_{n}^{(\alpha,\beta,\gamma,\delta,a,b)  }(x,y,z)  :=P_{n_{1},n_{2},n_{3}}^{(\alpha,\beta,\gamma,\delta,a,b)
}(x,y,z)    =P_{n_{1}}^{\left(  \beta+\gamma+\delta+a+b+2n_{2}+2n_{3}+2,\alpha \right)}\left(  2x-1\right)  (1-x)  ^{n_{2}} \\
  \times P_{n_{2}}^{\left(  \gamma+\delta+2n_{3}+b+1,\beta \right)  }\left(\frac{2y}{1-x}-1\right)  \left(  1-x-y\right)  ^{n_{3}}P_{n_{3}}^{\left(
\delta,\gamma \right)  }\left(  \frac{2z}{1-x-y}-1\right),
\end{multline}
where $n_{1}$, $n_{2}$, $n_{3}$, $n\in \mathbb{N}_{0}$ and $n_{1}+n_{2}+n_{3}=n$. The polynomials in \eqref{C} are orthogonal with respect to weight function
\[
w_{\alpha,\beta,\gamma,\delta,a,b}(x,y,z)=x^{\alpha}y^{\beta}z^{\gamma}(1-x-y-z)^{\delta}(1-x)  ^{a}\left(  1-x-y\right)  ^{b}
\]
on the simplex ${\mathbb{T}}^{3}$ defined in \eqref{eq:3dsimplex}. More precisely,
\begin{multline*}
{\displaystyle \iiint \limits_{{\mathbb{T}}^{3}}} w_{\alpha,\beta,\gamma,\delta,a,b}(x,y,z)P_{n_{1},n_{2},n_{3}}^{\left(\alpha,\beta,\gamma,\delta,a,b\right)  }(x,y,z)  P_{m_{1},m_{2},m_{3}}^{(\alpha,\beta,\gamma,\delta,a,b)  }(x,y,z)  dxdydz\\
  =h_{n_{1}}^{\left(  \beta+\gamma+\delta+a+b+2n_{2}+2n_{3}+2,\alpha \right)}h_{n_{2}}^{\left(  \gamma+\delta+2n_{3}+b+1,\beta \right)  }h_{n_{3}}^{\left(
\delta,\gamma \right)  }\delta_{m_{1},n_{1}}\delta_{m_{2},n_{2}}\delta_{m_{3},n_{3}},
\end{multline*}
where $h_{n}^{\left(  a,b \right)  }$ is given in \eqref{1}. The special case of $a=b=0$ gives the orthogonal polynomials on the simplex ${\mathbb{T}}^{3}~$\cite{DX}.

The structure of this paper is as follows. In the next section, we remind some differential relations and the corresponding sparse recurrence relations given in \cite{OTV} for univariate (shifted) Jacobi polynomials $\widetilde{P}_{n}^{\left(  a,b\right)}(x)$ and the bivariate polynomials $P_{n,k}^{\left(  a,b,c,d\right)}\left(  x,y\right)$. Some extra relations are also presented for the bivariate polynomials, which are new to the best of our knowledge. In section \ref{sec:3}, we give the corresponding differential relations for the polynomials \eqref{C}. Finally, in section \ref{sec:4} sparse recurrence relations for the polynomials \eqref{C1} are deduced.

\section{Preliminaries}\label{sec:2}

Let \cite{OTV}
\[
\begin{array}[c]{l}
\displaystyle \widetilde{\mathcal{L}}_{1}u=\dfrac{du}{dx},\\
\displaystyle \widetilde{\mathcal{L}}_{2,a,b,n}u=\left(  a+b+n+1\right)  u+x\frac{du}{dx},\\
\displaystyle \widetilde{\mathcal{L}}_{3,a,b,n}u=\left(  a+b+n+1\right)  u-(1-x)\frac{du}{dx},\\
\displaystyle \widetilde{\mathcal{L}}_{4,a,b,n}u=\left(  xa-(1-x)  \left(b+n+1\right)  \right)  u-x(1-x)  \frac{du}{dx},\\
\displaystyle \widetilde{\mathcal{L}}_{5,a,b,n}u=\left(  x\left(  a+n+1\right)  -\left(1-x\right)  b\right)  u-x(1-x)  \frac{du}{dx},\\
\displaystyle \widetilde{\mathcal{L}}_{6,b}u=bu+x\frac{du}{dx},
\end{array}
\begin{array}[c]{c}
\end{array}
\begin{array}[c]{l}
\displaystyle \widetilde{\mathcal{L}}_{1,a,b}^{+}u=\left(  xa-(1-x)  b\right)u-x(1-x)  \frac{du}{dx},\\
\displaystyle \widetilde{\mathcal{L}}_{2,a,n}^{+}u=\left(  a+(1-x)  n\right)u-x(1-x)  \frac{du}{dx},\\
\displaystyle \widetilde{\mathcal{L}}_{3,b,n}^{+}u=\left(  b+xn\right)  u+x(1-x)\frac{du}{dx},\\
\displaystyle \widetilde{\mathcal{L}}_{4,n}^{+}u=-nu+x\frac{du}{dx},\\
\displaystyle \widetilde{\mathcal{L}}_{5,n}^{+}u=nu+(1-x)  \frac{du}{dx},\\
\displaystyle \widetilde{\mathcal{L}}_{6,a}^{+}u=au-(1-x)  \frac{du}{dx}.
\end{array}
\]
Then, the following sparse recurrence relations for Jacobi polynomials hold true
\[
\begin{array}[c]{l}
\widetilde{\mathcal{L}}_{1}\widetilde{P}_{n}^{\left(  a,b\right)  }\left(x\right)  =\left(  n+a+b+1\right)  \widetilde{P}_{n-1}^{\left(a+1,b+1\right)  }(x), \\
\widetilde{\mathcal{L}}_{2,a,b,n}\widetilde{P}_{n}^{\left(  a,b\right)  }\left(x\right)  =\left(  n+a+b+1\right)  \widetilde{P}_{n}^{\left(  a+1,b\right)}(x), \\
\widetilde{\mathcal{L}}_{3,a,b,n}\widetilde{P}_{n}^{\left(  a,b\right)  }\left(x\right)  =\left(  n+a+b+1\right)  \widetilde{P}_{n}^{\left(  a,b+1\right)}(x), \\
\widetilde{\mathcal{L}}_{4,a,b,n}\widetilde{P}_{n}^{\left(  a,b\right)  }\left(x\right)  =\left(  n+1\right)  \widetilde{P}_{n+1}^{\left(  a-1,b\right)}(x), \\
\widetilde{\mathcal{L}}_{5,a,b,n}\widetilde{P}_{n}^{\left(  a,b\right)  }\left(x\right)  =\left(  n+1\right)  \widetilde{P}_{n+1}^{\left(  a,b-1\right)}(x), \\
\widetilde{\mathcal{L}}_{6,b}\widetilde{P}_{n}^{\left(  a,b\right)  }\left(x\right)  =\left(  n+b\right)  \widetilde{P}_{n}^{\left(  a+1,b-1\right)}(x),
\end{array}
\begin{array}[c]{c}
\end{array}
\begin{array}[c]{l}
\widetilde{\mathcal{L}}_{1,a,b}^{+}\widetilde{P}_{n}^{\left(  a,b\right)  }\left(x\right)  =\left(  n+1\right)  \widetilde{P}_{n+1}^{\left(  a-1,b-1\right)}(x), \\
\widetilde{\mathcal{L}}_{2,a,n}^{+}\widetilde{P}_{n}^{\left(  a,b\right)  }\left(x\right)  =\left(  n+a\right)  \widetilde{P}_{n}^{\left(  a-1,b\right)}(x), \\
\widetilde{\mathcal{L}}_{3,b,n}^{+}\widetilde{P}_{n}^{\left(  a,b\right)  }\left(x\right)  =\left(  n+b\right)  \widetilde{P}_{n}^{\left(  a,b-1\right)}(x), \\
\widetilde{\mathcal{L}}_{4,n}^{+}\widetilde{P}_{n}^{\left(  a,b\right)  }\left(x\right)  =\left(  n+b\right)  \widetilde{P}_{n-1}^{\left(  a+1,b\right)}(x), \\
\widetilde{\mathcal{L}}_{5,n}^{+}\widetilde{P}_{n}^{\left(  a,b\right)  }\left(x\right)  =\left(  n+a\right)  \widetilde{P}_{n-1}^{\left(  a,b+1\right)}(x), \\
\widetilde{\mathcal{L}}_{6,a}^{+}\widetilde{P}_{n}^{\left(  a,b\right)  }\left(x\right)  =\left(  n+a\right)  \widetilde{P}_{n}^{\left(  a-1,b+1\right)}(x).
\end{array}
\]
Moreover, the following second order differential relations yield
\begin{align*}
\widetilde {\mathcal{L}}_{1,a,b}^{+}\widetilde{\mathcal{L}}_{1}\widetilde{P}_{n}^{\left(a-1,b-1\right)  }(x)  &=n (n + a + b - 1) \widetilde{P}_{n}^{\left(  a-1,b-1\right)}(x),  \\
\widetilde{\mathcal{L}}_{1}\widetilde{\mathcal{L}}_{1,a,b}^{+}\widetilde{P}_{n}^{\left(  a,b\right)  }(x) &=(n+1)(a+b+n) \widetilde{P}_{n}^{\left(  a,b\right)  }(x) , \\
\widetilde {\mathcal{L}}_{2,a,n}^{+}\widetilde{\mathcal{L}}_{2,a,b,n}\widetilde{P}_{n}^{\left(a-1,b+1\right)  }(x)  &=(n + a) (n + a + b + 1) \widetilde{P}_{n}^{\left(  a-1,b+1\right)}(x),  \\
\widetilde {\mathcal{L}}_{2,a,b,n}\widetilde{\mathcal{L}}_{2,a,n}^{+}\widetilde{P}_{n}^{\left(a,b+1\right)  }(x)  &=(n + a) (n + a + b + 1) \widetilde{P}_{n}^{\left(  a,b+1\right)}(x), \\
\widetilde {\mathcal{L}}_{3,b,n}^{+}\widetilde{\mathcal{L}}_{3,a,b,n}\widetilde{P}_{n}^{\left(a+1,b-1\right)  }(x)  &=(n + b) (n + a + b + 1) \widetilde{P}_{n}^{\left(  a+1,b-1\right)}(x),  \\
\widetilde {\mathcal{L}}_{3,a,b,n}\widetilde{\mathcal{L}}_{3,b,n}^{+}\widetilde{P}_{n}^{\left(a+1,b\right)  }(x)  &=(n + b) (n + a + b + 1) \widetilde{P}_{n}^{\left(  a+1,b\right)}(x), \\
\widetilde {\mathcal{L}}_{4,n}^{+}\widetilde{\mathcal{L}}_{4,a,b,n}\widetilde{P}_{n-1}^{\left(a,b+1\right)  }(x)  &=n(n +  b + 1) \widetilde{P}_{n-1}^{\left(  a,b+1\right)}(x), \\
\widetilde {\mathcal{L}}_{4,a,b,n}\widetilde{\mathcal{L}}_{4,n}^{+}\widetilde{P}_{n}^{\left(a-1,b+1\right)  }(x)  &=n(n + b + 1) \widetilde{P}_{n}^{\left(  a-1,b+1\right)}(x), \\
\widetilde {\mathcal{L}}_{5,n}^{+}\widetilde{\mathcal{L}}_{5,a,b,n}\widetilde{P}_{n-1}^{\left(a+1,b\right)  }(x)  &=n (n + a + 1) \widetilde{P}_{n-1}^{\left(  a+1,b\right)}(x),  \\
\widetilde {\mathcal{L}}_{5,a,b,n}\widetilde{\mathcal{L}}_{5,n}^{+}\widetilde{P}_{n}^{\left(a+1,b-1\right)  }(x) &=n (n + a + 1) \widetilde{P}_{n}^{\left(  a+1,b-1\right)}(x),
\end{align*}
\begin{align*}
\widetilde {\mathcal{L}}_{6,a}^{+}\widetilde{\mathcal{L}}_{6,b}\widetilde{P}_{n}^{\left(a-1,b\right)  }(x)  &=(n+a)(n+b)\widetilde{P}_{n}^{\left(  a-1,b\right)}(x), \\
\widetilde {\mathcal{L}}_{6,b}\widetilde{\mathcal{L}}_{6,a}^{+}\widetilde{P}_{n}^{\left(a,b-1\right)  }(x)  &=(n+a)(n+b) \widetilde{P}_{n}^{\left(  a,b-1\right)}(x).
\end{align*}
{}From the above relations we also have the following second order differential equations for the univariate shifted Jacobi polynomials
\begin{align*}
\widetilde {\mathcal{L}}_{1,a+1,b+1}^{+}\widetilde{\mathcal{L}}_{1}\widetilde{P}_{n}^{\left(a,b\right)  }(x)  &=n (n + a + b + 1) \widetilde{P}_{n}^{\left(  a,b\right)}(x),  \\
\widetilde{\mathcal{L}}_{1}\widetilde{\mathcal{L}}_{1,a,b}^{+}\widetilde{P}_{n}^{\left(  a,b\right)  }(x) &=(n+1)(a+b+n) \widetilde{P}_{n}^{\left(  a,b\right)  }(x) , \\
\widetilde {\mathcal{L}}_{2,a+1,n}^{+}\widetilde{\mathcal{L}}_{2,a+1,b-1,n}\widetilde{P}_{n}^{\left(a,b\right)  }(x)  &=(n + a +1) (n + a + b + 1) \widetilde{P}_{n}^{\left(  a,b\right)}(x),  \\
\widetilde {\mathcal{L}}_{2,a,b-1,n}\widetilde{\mathcal{L}}_{2,a,n}^{+}\widetilde{P}_{n}^{\left(a,b\right)  }(x)  &=(n + a) (n + a + b ) \widetilde{P}_{n}^{\left(  a,b\right)}(x), \\
\widetilde {\mathcal{L}}_{3,b+1,n}^{+}\widetilde{\mathcal{L}}_{3,a-1,b+1,n}\widetilde{P}_{n}^{\left(a,b\right)  }(x)  &=(n + b+1) (n + a + b+1 ) \widetilde{P}_{n}^{\left(  a,b\right)}(x),  \\
\widetilde {\mathcal{L}}_{3,a-1,b,n}\widetilde{\mathcal{L}}_{3,b,n}^{+}\widetilde{P}_{n}^{\left(a,b\right)  }(x)  &=(n + b) (n + a + b ) \widetilde{P}_{n}^{\left(  a,b\right)}(x),  \\
\widetilde {\mathcal{L}}_{4,n+1}^{+}\widetilde{\mathcal{L}}_{4,a,b-1,n+1}\widetilde{P}_{n}^{\left(a,b\right)  }(x)  &=(n + 1) (n + b+1) \widetilde{P}_{n}^{\left(  a,b\right)}(x),  \\
\widetilde {\mathcal{L}}_{4,a+1,b-1,n}\widetilde{\mathcal{L}}_{4,n}^{+}\widetilde{P}_{n}^{\left(a,b\right)  }(x)  &=n(n + b ) \widetilde{P}_{n}^{\left(  a,b\right)}(x), \\
\widetilde {\mathcal{L}}_{5,n+1}^{+}\widetilde{\mathcal{L}}_{5,a-1,b,n+1}\widetilde{P}_{n}^{\left(a,b\right)  }(x)  &=(n+1) (n + a + 1) \widetilde{P}_{n}^{\left(  a,b\right)}(x), \\
\widetilde {\mathcal{L}}_{5,a-1,b+1,n}\widetilde{\mathcal{L}}_{5,n}^{+}\widetilde{P}_{n}^{\left(a,b\right)  }(x) &=n (n + a ) \widetilde{P}_{n}^{\left(  a,b\right)}(x), \\
\widetilde {\mathcal{L}}_{6,a+1}^{+}\widetilde{\mathcal{L}}_{6,b}\widetilde{P}_{n}^{\left(a,b\right)  }(x)  &=(n+a+1)(n+b)\widetilde{P}_{n}^{\left(  a,b\right)}(x), \\
\widetilde {\mathcal{L}}_{6,b+1}\widetilde{\mathcal{L}}_{6,a}^{+}\widetilde{P}_{n}^{\left(a,b\right)  }(x)  &=(n+a)(n+b+1) \widetilde{P}_{n}^{\left(  a,b\right)}(x).
\end{align*}
The first, second and the last two relations give exactly the second order differential equation for the univariate shifted Jacobi polynomials. Moreover, the third, fourth, seventh, and eighth relations give the second order differential equation for the univariate shifted Jacobi polynomials multiplied by $x$, while the fifth, sixth, ninth, and tenth relations give the second order differential equation for the univariate shifted Jacobi polynomials multiplied by $1-x$.

By using these $12$ relations, in \cite{OTV} $24$ relations for the bivariate orthogonal polynomials $P_{n,k}^{(a,b,c,d)}\left(  x,y\right)$ defined in \eqref{B} are derived. These $24$ relations map $P_{n,k}^{(a,b,c,d)}\left(  x,y\right)$ to a scalar multiple of $P_{\widetilde{n},\widetilde{k}}^{\left(  \widetilde{a},\widetilde{b},\widetilde{c},\widetilde{d}\right)  }\left(  x,y\right)$, where the new parameters in $P_{\widetilde{n},\widetilde{k}}^{\left(\widetilde{a},\widetilde{b},\widetilde{c},\widetilde{d}\right)  }\left(x,y\right)  $ are $n,k,a,b,c$ or $d$, respectively, incremented or decremented by $0$ or $1$. Following the notations in \cite{OTV}, let
\[
\begin{array}[c]{l}
\mathcal{M}_{0,1}u=\dfrac{\partial u}{\partial y},\\
\mathcal{M}_{0,2}u=\left(  k+b+c+1\right)  u+y\dfrac{\partial u}{\partial y},\\
\mathcal{M}_{0,3}u=\left(  k+b+c+1\right)  u-(1-x-y)\dfrac{\partial u}{\partial y},\\
\mathcal{M}_{0,4}u=\left(  yc-(1-x-y)\left(  b+k+1\right)  \right)  u-y(1-x-y)\dfrac{\partial u}{\partial y},\\
\mathcal{M}_{0,5}u=\left(  y\left(  c+k+1\right)  -(1-x-y)b\right)  u-y(1-x-y)\dfrac{\partial u}{\partial y},\\
\mathcal{M}_{0,6}u=bu+y\dfrac{\partial u}{\partial y}, \\
\displaystyle \mathcal{M}_{0,1}^{+}u=\left(  yc-(1-x-y)b\right)  u-y(1-x-y)\dfrac{\partial u}{\partial y},\\
\displaystyle \mathcal{M}_{0,2}^{+}u=\left(  c+k-\frac{yk}{1-x}\right)  u-\frac{y(1-x-y)}{1-x}\dfrac{\partial u}{\partial y},\\
\displaystyle \mathcal{M}_{0,3}^{+}u=\left(  b+\frac{ky}{1-x}\right)  u+\frac{y(1-x-y)}{1-x}\dfrac{\partial u}{\partial y},\\
\displaystyle \mathcal{M}_{0,4}^{+}u=-\frac{k}{1-x}u+\frac{y}{1-x}\dfrac{\partial u}{\partial y},
\end{array}
\]
\[
\begin{array}[c]{l}
\displaystyle \mathcal{M}_{0,5}^{+}u=\frac{k}{1-x}u+\left(  1-\frac{y}{1-x}\right)\dfrac{\partial u}{\partial y},\\
\displaystyle \mathcal{M}_{0,6}^{+}u=cu-(1-x-y)\dfrac{\partial u}{\partial y},
\end{array}
\]
and
\[
\begin{array}[c]{l}
\mathcal{M}_{1,0}u=\dfrac{ku}{1-x}+\dfrac{\partial u}{\partial x}-\frac{y}{1-x}\dfrac{\partial u}{\partial y},\\
\mathcal{M}_{1,0}^{+}u=\left(  x\left(  k+a+b+c+d+1\right)  -a\right)u-x(1-x)  \dfrac{\partial u}{\partial x}+xy\dfrac{\partial u}{\partial y},\\
\mathcal{M}_{2,0}u=\left(  n+k+a+b+c+d+2\right)  u+\dfrac{kx}{1-x}u+x\dfrac{\partial u}{\partial x}-\frac{xy}{1-x}\dfrac{\partial u}{\partial y},\\
\mathcal{M}_{2,0}^{+}u=\left(  n+k+b+c+d+1-xn\right)  u-x(1-x)\dfrac{\partial u}{\partial x}+xy\dfrac{\partial u}{\partial y},\\
\mathcal{M}_{3,0}u=\left(  n+a+b+c+d+2\right)  u-(1-x)\dfrac{\partial u}{\partial x}+y\dfrac{\partial u}{\partial y},\\
\mathcal{M}_{3,0}^{+}u=\left(  a+xn\right)  u+x(1-x)\dfrac{\partial u}{\partial x}-xy\dfrac{\partial u}{\partial y},\\
\mathcal{M}_{4,0}u=\left(  x\left(  n+a+b+c+d+2\right)  -a-n+k-1\right)u-x(1-x)  \dfrac{\partial u}{\partial x}+xy\dfrac{\partial u}{\partial y},\\
\mathcal{M}_{4,0}^{+}u=\dfrac{ku}{1-x}-nu+x\dfrac{\partial u}{\partial x}-\frac{xy}{1-x}\dfrac{\partial u}{\partial y},\\
\mathcal{M}_{5,0}u=x\left(  n+a+b+c+d+2\right)  u-au-x(1-x)\dfrac{\partial u}{\partial x}+xy\dfrac{\partial u}{\partial y},\\
\mathcal{M}_{5,0}^{+}u=nu+(1-x)  \dfrac{\partial u}{\partial x}-y\dfrac{\partial u}{\partial y},\\
\mathcal{M}_{6,0}u=au+\dfrac{kx}{1-x}u+x\dfrac{\partial u}{\partial x}-\frac{xy}{1-x}\dfrac{\partial u}{\partial y},\\
\mathcal{M}_{6,0}^{+}u=\left(  k+b+c+d+1\right)  u-(1-x)\dfrac{\partial u}{\partial x}+y\dfrac{\partial u}{\partial y}.
\end{array}
\]
The sparse recurrence relations are given as follows
\begin{align*}
\mathcal{M}_{0,1}P_{n,k}^{(a,b,c,d)}\left(  x,y\right)   &=\left(  k+b+c+1\right)  P_{n-1,k-1}^{\left(  a,b+1,c+1,d\right)  }\left(x,y\right), \\
\mathcal{M}_{0,1}^{+}P_{n,k}^{(a,b,c,d)}\left(  x,y\right)   &=\left(  k+1\right)  P_{n+1,k+1}^{\left(  a,b-1,c-1,d\right)  }\left(x,y\right), \\
\mathcal{M}_{0,2}P_{n,k}^{(a,b,c,d)}\left(  x,y\right)   &=\left(  k+b+c+1\right)  P_{n,k}^{\left(  a,b,c+1,d-1\right)  }\left(x,y\right), \\
\mathcal{M}_{0,2}^{+}P_{n,k}^{(a,b,c,d)}\left(  x,y\right)   &=\left(  k+c\right)  P_{n,k}^{\left(  a,b,c-1,d+1\right)  }\left(  x,y\right), \\
\mathcal{M}_{0,3}P_{n,k}^{(a,b,c,d)}\left(  x,y\right)   &=\left(  k+b+c+1\right)  P_{n,k}^{\left(  a,b+1,c,d-1\right)  }\left(x,y\right), \\
\mathcal{M}_{0,3}^{+}P_{n,k}^{(a,b,c,d)}\left(  x,y\right)   &=\left(  k+b\right)  P_{n,k}^{\left(  a,b-1,c,d+1\right)  }\left(  x,y\right),\\
\mathcal{M}_{0,4}P_{n,k}^{(a,b,c,d)}\left(  x,y\right)   &=\left(  k+1\right)  P_{n+1,k+1}^{\left(  a,b,c-1,d-1\right)  }\left(x,y\right), \\
\mathcal{M}_{0,4}^{+}P_{n,k}^{(a,b,c,d)}\left(  x,y\right)   &=\left(  k+b\right)  P_{n-1,k-1}^{\left(  a,b,c+1,d+1\right)  }\left(x,y\right), \\
\mathcal{M}_{0,5}P_{n,k}^{(a,b,c,d)}\left(  x,y\right)   &=\left(  k+1\right)  P_{n+1,k+1}^{\left(  a,b-1,c,d-1\right)  }\left(x,y\right), \\
\mathcal{M}_{0,5}^{+}P_{n,k}^{(a,b,c,d)}\left(  x,y\right)   &=\left(  k+c\right)  P_{n-1,k-1}^{\left(  a,b+1,c,d+1\right)  }\left(x,y\right), \\
\mathcal{M}_{0,6}P_{n,k}^{(a,b,c,d)}\left(  x,y\right)   &=\left(  k+b\right)  P_{n,k}^{\left(  a,b-1,c+1,d\right)  }\left(  x,y\right),\\
\mathcal{M}_{0,6}^{+}P_{n,k}^{(a,b,c,d)}\left(  x,y\right)   &=\left(  k+c\right)  P_{n,k}^{\left(  a,b+1,c-1,d\right)  }\left(  x,y\right),
\end{align*}
and
\begin{align*}
\mathcal{M}_{1,0}P_{n,k}^{(a,b,c,d)}\left(  x,y\right)   &=\left(  n+k+a+b+c+d+2\right)  P_{n-1,k}^{\left(  a+1,b,c,d+1\right)  }\left(x,y\right), \\
\mathcal{M}_{1,0}^{+}P_{n,k}^{(a,b,c,d)}\left(  x,y\right)   &=\left(  n-k+1\right)  P_{n+1,k}^{\left(  a-1,b,c,d-1\right)  }\left(x,y\right), \\
\mathcal{M}_{2,0}P_{n,k}^{(a,b,c,d)}\left(  x,y\right)   &=\left(  n+k+a+b+c+d+2\right)  P_{n,k}^{\left(  a,b,c,d+1\right)  }\left(x,y\right), \\
\mathcal{M}_{2,0}^{+}P_{n,k}^{(a,b,c,d)}\left(  x,y\right)   &=\left(  n+k+b+c+d+1\right)  P_{n,k}^{\left(  a,b,c,d-1\right)  }\left(x,y\right),
\end{align*}
\begin{align*}
\mathcal{M}_{3,0}P_{n,k}^{(a,b,c,d)}\left(  x,y\right)   &=\left(  n+k+a+b+c+d+2\right)  P_{n,k}^{\left(  a+1,b,c,d\right)  }\left(x,y\right), \\
\mathcal{M}_{3,0}^{+}P_{n,k}^{(a,b,c,d)}\left(  x,y\right)   &=\left(  n-k+a\right)  P_{n,k}^{\left(  a-1,b,c,d\right)  }\left(  x,y\right),\\
\mathcal{M}_{4,0}P_{n,k}^{(a,b,c,d)}\left(  x,y\right)   &=\left(  n-k+1\right)  P_{n+1,k}^{\left(  a,b,c,d-1\right)  }\left(x,y\right), \\
\mathcal{M}_{4,0}^{+}P_{n,k}^{(a,b,c,d)}\left(  x,y\right)   &=\left(  n-k+a\right)  P_{n-1,k}^{\left(  a,b,c,d+1\right)  }\left(x,y\right), \\
\mathcal{M}_{5,0}P_{n,k}^{(a,b,c,d)}\left(  x,y\right)   &=\left(  n-k+1\right)  P_{n+1,k}^{\left(  a-1,b,c,d\right)  }\left(x,y\right), \\
\mathcal{M}_{5,0}^{+}P_{n,k}^{(a,b,c,d)}\left(  x,y\right)   &=\left(  n+k+b+c+d+1\right)  P_{n-1,k}^{\left(  a+1,b,c,d\right)  }\left(x,y\right), \\
\mathcal{M}_{6,0}P_{n,k}^{(a,b,c,d)}\left(  x,y\right)   &=\left(  n-k+a\right)  P_{n,k}^{\left(  a-1,b,c,d+1\right)  }\left(x,y\right), \\
\mathcal{M}_{6,0}^{+}P_{n,k}^{(a,b,c,d)}\left(  x,y\right)   &=\left(  n+k+b+c+d+1\right)  P_{n,k}^{\left(  a+1,b,c,d-1\right)  }\left(x,y\right).
\end{align*}
Moreover, the following second order differential relations hold
\begin{align*}
\mathcal{M}_{0,1}^{+}\mathcal{M}_{0,1}P_{n,k}^{\left(  a,b-1,c-1,d\right)}\left(  x,y\right)   &  =k\left(  k+b+c-1\right)  P_{n,k}^{\left(a,b-1,c-1,d\right)  }\left(  x,y\right), \\
\mathcal{M}_{0,1}\mathcal{M}_{0,1}^{+}P_{n,k}^{\left(  a,b,c,d\right)}\left(  x,y\right)   &  =\left(  k+1\right)  \left(  k+b+c\right)P_{n,k}^{(a,b,c,d)}\left(  x,y\right), \\
\mathcal{M}_{0,2}^{+}\mathcal{M}_{0,2}P_{n,k}^{\left(  a,b+1,c-1,d\right)}\left(  x,y\right)   &  =\left(  k+c\right)  \left(  k+b+c+1\right)P_{n,k}^{\left(  a,b+1,c-1,d\right)  }\left(  x,y\right), \\
\mathcal{M}_{0,2}\mathcal{M}_{0,2}^{+}P_{n,k}^{\left(  a,b+1,c,d\right)}\left(  x,y\right)   &  =\left(  k+c\right)  \left(  k+b+c+1\right)P_{n,k}^{\left(  a,b+1,c,d\right)  }\left(  x,y\right), \\
\mathcal{M}_{0,3}^{+}\mathcal{M}_{0,3}P_{n,k}^{\left(  a,b-1,c+1,d\right)}\left(  x,y\right)   &  =\left(  k+b\right)  \left(  k+b+c+1\right)P_{n,k}^{\left(  a,b-1,c+1,d\right)  }\left(  x,y\right), \\
\mathcal{M}_{0,3}\mathcal{M}_{0,3}^{+}P_{n,k}^{\left(  a,b,c+1,d\right)}\left(  x,y\right)   &  =\left(  k+b\right)  \left(  k+b+c+1\right)P_{n,k}^{\left(  a,b,c+1,d\right)  }\left(  x,y\right), \\
\mathcal{M}_{0,4}^{+}\mathcal{M}_{0,4}P_{n,k-1}^{\left(  a,b+1,c,d\right)}\left(  x,y\right)   &  =k\left(  k+b+1\right)  P_{n,k-1}^{\left(a,b+1,c,d\right)  }\left(  x,y\right), \\
\mathcal{M}_{0,4}\mathcal{M}_{0,4}^{+}P_{n,k}^{\left(  a,b+1,c-1,d\right)}\left(  x,y\right)   &  =k\left(  k+b+1\right)  P_{n,k}^{\left(a,b+1,c-1,d\right)  }\left(  x,y\right), \\
\mathcal{M}_{0,5}^{+}\mathcal{M}_{0,5}P_{n,k-1}^{\left(  a,b,c+1,d\right)}\left(  x,y\right)   &  =k\left(  k+c+1\right)  P_{n,k-1}^{\left(a,b,c+1,d\right)  }\left(  x,y\right), \\
\mathcal{M}_{0,5}\mathcal{M}_{0,5}^{+}P_{n,k}^{\left(  a,b-1,c+1,d\right)}\left(  x,y\right)   &  =k\left(  k+c+1\right)  P_{n,k}^{\left(a,b-1,c+1,d\right)  }\left(  x,y\right), \\
\mathcal{M}_{0,6}^{+}\mathcal{M}_{0,6}P_{n,k}^{\left(  a,b,c-1,d\right)}\left(  x,y\right)   &  =\left(  k+b\right)  \left(  k+c\right)P_{n,k}^{\left(  a,b,c-1,d\right)  }\left(  x,y\right), \\
\mathcal{M}_{0,6}\mathcal{M}_{0,6}^{+}P_{n,k}^{\left(  a,b-1,c,d\right)}\left(  x,y\right)   &  =\left(  k+b\right)  \left(  k+c\right)P_{n,k}^{\left(  a,b-1,c,d\right)  }\left(  x,y\right),
\end{align*}
and
\begin{align*}
\mathcal{M}_{1,0}^{+}\mathcal{M}_{1,0}P_{n,k}^{\left(  a-1,b,c,d-1\right)}\left(  x,y\right)   &  =\left(  n-k\right)  \left(  n+k+a+b+c+d\right)P_{n,k}^{\left(  a-1,b,c,d-1\right)  }\left(  x,y\right), \\
\mathcal{M}_{1,0}\mathcal{M}_{1,0}^{+}P_{n,k}^{\left(  a,b,c,d\right)}\left(  x,y\right)   &  =\left(  n-k+1\right)  \left(  n+k+a+b+c+d+1\right)P_{n,k}^{(a,b,c,d)}\left(  x,y\right), \\
\mathcal{M}_{2,0}^{+}\mathcal{M}_{2,0}P_{n,k}^{\left(  a+1,b,c,d-1\right)}\left(  x,y\right)   &  =\left(  n+k+a+b+c+d+2\right) \\ &\times \left(  n+k+b+c+d+1\right)P_{n,k}^{\left(  a+1,b,c,d-1\right)  }\left(  x,y\right), \\
\mathcal{M}_{2,0}\mathcal{M}_{2,0}^{+}P_{n,k}^{\left(  a+1,b-1,c,d+1\right)}\left(  x,y\right)   &  =\left(  n+k+a+b+c+d+2\right) \\ &\times \left(  n+k+b+c+d+1\right)P_{n,k}^{\left(  a+1,b-1,c,d+1\right)  }\left(  x,y\right), \\
\mathcal{M}_{3,0}^{+}\mathcal{M}_{3,0}P_{n,k}^{\left(  a-1,b+1,c,d\right)}\left(  x,y\right)   &  =\left(  n+k+a+b+c+d+2\right)  \left(  n-k+a\right)P_{n,k}^{\left(  a-1,b+1,c,d\right)  }\left(  x,y\right), \\
\mathcal{M}_{3,0}\mathcal{M}_{3,0}^{+}P_{n,k}^{\left(  a,b+1,c,d\right)}\left(  x,y\right)   &  =\left(  n+k+a+b+c+d+2\right)  \left(  n-k+a\right)P_{n,k}^{\left(  a,b+1,c,d\right)  }\left(  x,y\right), \\
\mathcal{M}_{4,0}^{+}\mathcal{M}_{4,0}P_{n-1,k}^{\left(  a+1,b,c,d\right)}\left(  x,y\right)   &  =\left(  n-k\right)  \left(  n-k+a+1\right)P_{n-1,k}^{\left(  a+1,b,c,d\right)  }\left(  x,y\right), \\
\mathcal{M}_{4,0}\mathcal{M}_{4,0}^{+}P_{n,k}^{\left(  a+1,b-1,c,d\right)}\left(  x,y\right)   &  =\left(  n-k\right)  \left(  n-k+a+1\right)P_{n,k}^{\left(  a+1,b-1,c,d\right)  }\left(  x,y\right), \\
\mathcal{M}_{5,0}^{+}\mathcal{M}_{5,0}P_{n-1,k}^{\left(  a,b+1,c,d\right)}\left(  x,y\right)   &  =\left(  n-k\right)  \left(  n+k+b+c+d+2\right)P_{n-1,k}^{\left(  a,b+1,c,d\right)  }\left(  x,y\right), \\
\mathcal{M}_{5,0}\mathcal{M}_{5,0}^{+}P_{n,k}^{\left(  a-1,b+1,c,d\right)}\left(  x,y\right)   &  =\left(  n-k\right)  \left(  n+k+b+c+d+2\right)P_{n,k}^{\left(  a-1,b+1,c,d\right)  }\left(  x,y\right),
\end{align*}
\begin{align*}
\mathcal{M}_{6,0}^{+}\mathcal{M}_{6,0}P_{n,k}^{\left(  a,b,c,d-1\right)}\left(  x,y\right)   &  =\left(  n-k+a\right)  \left(  n+k+b+c+d+1\right)P_{n,k}^{\left(  a,b,c,d-1\right)  }\left(  x,y\right), \\
\mathcal{M}_{6,0}\mathcal{M}_{6,0}^{+}P_{n,k}^{\left(  a-1,b,c,d\right)}\left(  x,y\right)   &  =\left(  n-k+a\right)  \left(  n+k+b+c+d+1\right)P_{n,k}^{\left(  a-1,b,c,d\right)  }\left(  x,y\right).
\end{align*}

The polynomials $P_{n,k}^{(a,b,c,d)}\left(  x,y\right)  $ defined in \eqref{B} satisfy the following differential equations
\begin{equation}\label{A1}
L_{1}u:=y\left(  1-x-y\right)  \frac{\partial^{2}u}{\partial y^{2}}+\left(\left(  b+1\right)  (1-x)  -\left(  b+c+2\right)  y\right)\frac{\partial u}{\partial y}+k\left(  k+b+c+1\right)  u=0,
\end{equation}
and
\begin{multline}\label{C11}
L_{2}u:=x(1-x)  \dfrac{\partial^{2}u}{\partial x^{2}}-2xy\dfrac{\partial^{2}u}{\partial x\partial y}+y(1-y)\dfrac{\partial^{2}u}{\partial y^{2}}+\left(  a+1-\left(  a+b+c+d+3\right)x\right)  \dfrac{\partial u}{\partial x}\\
+\left(  b+1-\left(  a+b+c+d+3\right)  y+\dfrac{yd}{1-x}\right)\dfrac{\partial u}{\partial y}\\+\left(  n\left(  n+a+b+c+d+2\right)-\dfrac{kd}{1-x}\right)  u=0.
\end{multline}
The equation (\ref{A1}) is given in \cite{OTV}. From $L_{2}u-\frac{1}{1-x}L_{1}u=0$, it also yields
\begin{multline}\label{B1}
x(1-x)  \dfrac{\partial^{2}u}{\partial x^{2}}-2xy\dfrac{\partial^{2}u}{\partial x\partial y}+\dfrac{xy^{2}}{1-x}\dfrac{\partial^{2}u}{\partial y^{2}}+\left(  a+1-\left(  a+b+c+d+3\right)  x\right)\dfrac{\partial u}{\partial x}\\
-\left( \frac{y (a+1-x (a+b+c+d+3))}{1-x} \right)  \dfrac{\partial u}{\partial y}\\
+\left(  n\left(  n+a+b+c+d+2\right)  -\dfrac{k\left(  k+b+c+d+1\right)
}{1-x}\right)  u=0.
\end{multline}

\begin{remark}
For second order differential equations in the first set given above, the first, second and the last two equations give exactly the second order equation in the form of \eqref{A1}. The third, fourth, seventh and eighth equations give the equation \eqref{A1} multiplied by $y$. The fifth, sixth, ninth and tenth equations give the equation \eqref{A1} multiplied by $1-x-y$.

For second order differential equations in the second set given above, the first, second and the last two equations give exactly the second order equation in the form of \eqref{B1}. The third, fourth, seventh and eighth equations give the equation \eqref{B1} multiplied by $x.$ The fifth, sixth, ninth and tenth equations give the equation \eqref{B1} multiplied by $1-x$.
\end{remark}

\begin{remark}
The partial differential equation \eqref{B1} has also a monic solution \cite{MR2853206}, given by
\begin{multline}
\tilde{P}_{n,k}^{(a,b,c,d)}(x,y)=\left(  -1\right)  ^{n+k} \dfrac{\Gamma \left(a+n-k+1\right)  \Gamma \left(  a+b+c+d+n+k+2\right)  }{\Gamma \left(a+1\right)  \Gamma \left(  a+b+c+d+2n+2\right)  }\\
 \times y^{k} \, \hyper{2}{1}{-n+k,a+b+c+d+n+k+2}{a+1}{x},
\end{multline}
which can be also written as
\begin{equation}
\tilde{P}_{n,k}^{(a,b,c,d)}(x,y)=\dfrac{\left(  n-k\right)  !\Gamma \left(  a+b+c+d+n+k+2\right)}{\Gamma \left(  a+b+c+d+2n+2\right)  }y^{k}~\widetilde{P}_{n-k}^{\left(
b+c+d+2k+1,a\right)  }(x)
\end{equation}
in terms of shifted Jacobi polynomials $\widetilde{P}_{n}^{\left(  a,b\right)  }(x)$ defined in \eqref{eq:sjp}. Notice that in this representation of the monic solution of the partial differential equation \eqref{B1} the parameters of the shifted Jacobi polynomials depend on the degree i.e. they are varying orthogonal polynomials \cite{Zarzo}.
\end{remark}

\section{Differential relations for $P_{n}^{(\alpha,\beta,\gamma,\delta,a,b)  }(x,y,z)  $}\label{sec:3}

By using the 12 differential relations given in section \ref{sec:2} for the univariate shifted Jacobi polynomials $\widetilde{P}_{n}^{\left(  a,b\right)  }(x)$ defined in \eqref{eq:sjp} and the 24 differential relations also given in section \ref{sec:2} for the polynomials $P_{n,k}^{(a,b,c,d)}\left(x,y\right) $ defined in \eqref{B}, we obtain 36 differential relations for $P_{n_{1},n_{2},n_{3}}^{(\alpha,\beta,\gamma,\delta,a,b)}(x,y,z)$ defined in \eqref{C}, which map $P_{n_{1},n_{2},n_{3}}^{\left(\alpha,\beta,\gamma,\delta,a,b\right)  }(x,y,z)  $ to a scalar multiple of $P_{\widetilde{n_{1}},\widetilde{n_{2}},\widetilde{n_{3}}%
}^{\left(  \widetilde{\alpha},\widetilde{\beta},\widetilde{\gamma},\widetilde{\delta},\widetilde{a},\widetilde{b}\right)  }(x,y,z)$ where the parameters in the new family $P_{\widetilde{n_{1}},\widetilde{n_{2}},\widetilde{n_{3}}}^{\left(  \widetilde{\alpha},\widetilde{\beta},\widetilde{\gamma},\widetilde{\delta},\widetilde{a},\widetilde{b}\right)
}(x,y,z)  $ are $n_{1},n_{2},n_{3},\alpha,\beta,\gamma,\delta,a,b$ incremented or decremented by $0$ or $1$.

Let
\begin{align*}
\mathcal{N}_{0,1}u&=\dfrac{n_{3}u}{1-x-y}+\dfrac{\partial u}{\partial y}-\dfrac{z}{1-x-y}\dfrac{\partial u}{\partial z},\\
\mathcal{N}_{0,1}^{+}u&=\left(  y\left(  \gamma+\delta+n_{3}+b+1\right)-\left(  1-x-y\right)  \beta \right)  u-y\left(  1-x-y\right)  \dfrac{\partial u}{\partial y}+yz\dfrac{\partial u}{\partial z},\\
\mathcal{N}_{0,2}u&=\left(  n_{2}+2n_{3}+\beta+\gamma+\delta+b+2\right)u+\dfrac{n_{3}y}{1-x-y}u+y\dfrac{\partial u}{\partial y}-\dfrac{yz}{1-x-y}\dfrac{\partial u}{\partial z},\\
\mathcal{N}_{0,2}^{+}u&=\left(  n_{2}+2n_{3}+\gamma+\delta+b+1\right)u-\dfrac{\left(  n_{2}+n_{3}\right)  y}{1-x}u-\frac{y\left(  1-x-y\right)}{1-x}\dfrac{\partial u}{\partial y}+\frac{yz}{1-x}\dfrac{\partial u}{\partial z},\\
\mathcal{N}_{0,3}u&=\left(  n_{2}+n_{3}+\beta+\gamma+\delta+b+2\right)u-\left(  1-x-y\right)  \dfrac{\partial u}{\partial y}+z\dfrac{\partial u}{\partial z},\\
\mathcal{N}_{0,3}^{+}u&=\left(  \beta+\dfrac{n_{2}+n_{3}}{1-x}y\right)u+\dfrac{y\left(  1-x-y\right)  }{1-x}\dfrac{\partial u}{\partial y}
-\dfrac{yz}{1-x}\dfrac{\partial u}{\partial z},\\
\mathcal{N}_{0,4}u&=\left(  y\left(  n_{3}+\gamma+\delta+b+1\right)  -\left(1-x-y\right)  \left(  \beta+n_{2}+1\right)  \right)  u-y\left(  1-x-y\right)
\dfrac{\partial u}{\partial y}+yz\dfrac{\partial u}{\partial z},\\
\mathcal{N}_{0,4}^{+}u&=-\dfrac{n_{2}u}{1-x}+\dfrac{n_{3}y}{(1-x)\left(  1-x-y\right)  }u+\dfrac{y}{1-x}\dfrac{\partial u}{\partial y}-\dfrac{yz}{(1-x)  \left(  1-x-y\right)  }\dfrac{\partial u}{\partial z},\\
\mathcal{N}_{0,5}u&=\left(  y\left(  n_{2}+n_{3}+\gamma+\delta+b+2\right)-\left(  1-x-y\right)  \beta \right)  u-y\left(  1-x-y\right)  \dfrac{\partial u}{\partial y}+yz\dfrac{\partial u}{\partial z},\\
\mathcal{N}_{0,5}^{+}u&=\dfrac{n_{2}+n_{3}}{1-x}u+\left(  1-\dfrac{y}{1-x}\right)  \dfrac{\partial u}{\partial y}-\dfrac{z}{1-x}\dfrac{\partial u}{\partial z},\\
\mathcal{N}_{0,6}u&=\beta u+\dfrac{n_{3}y}{1-x-y}u+y\dfrac{\partial u}{\partial y}-\dfrac{yz}{1-x-y}\dfrac{\partial u}{\partial z},\\
\mathcal{N}_{0,6}^{+}u&=\left(  \gamma+\delta+n_{3}+b+1\right)  u-\left(1-x-y\right)  \dfrac{\partial u}{\partial y}+z\dfrac{\partial u}{\partial z},
\end{align*}
and
\begin{align*}
\mathcal{N}_{1,0}u&=\dfrac{n_{2}+n_{3}}{1-x}u+\dfrac{\partial u}{\partial x}-\dfrac{y}{1-x}\dfrac{\partial u}{\partial y}-\dfrac{z}{1-x}\dfrac{\partial u}{\partial z},\\
\mathcal{N}_{1,0}^{+}u&=\left(  x\left(  n_{2}+n_{3}+e+2\right)  -\alpha \right)  u-x(1-x)  \dfrac{\partial u}{\partial x}+xy\dfrac{\partial u}{\partial y}+xz\dfrac{\partial u}{\partial z},\\
\mathcal{N}_{2,0}u&=\left(  n+n_{2}+n_{3}+e+3\right)  u+\dfrac{\left(n_{2}+n_{3}\right)  xu}{1-x}+x\dfrac{\partial u}{\partial x}-\dfrac{xy}{1-x}\dfrac{\partial u}{\partial y}-\dfrac{xz}{1-x}\dfrac{\partial u}{\partial z},\\
\mathcal{N}_{2,0}^{+}u&=\left(  n+n_{2}+n_{3}+e-\alpha+2- nx\right)  u-x(1-x)  \dfrac{\partial u}{\partial x}+xy\dfrac{\partial u}{\partial y}+xz\dfrac{\partial u}{\partial
z},\\
\mathcal{N}_{3,0}u&=\left(  n+e+3\right)  u-(1-x)  \dfrac{\partial u}{\partial x}+y\dfrac{\partial u}{\partial y}+z\dfrac{\partial u}{\partial z},\\
\mathcal{N}_{3,0}^{+}u&=\left(  \alpha+nx\right)  u+x(1-x) \dfrac{\partial u}{\partial x}-xy\dfrac{\partial u}{\partial y}-xz\dfrac {\partial u}{\partial z},\\
\mathcal{N}_{4,0}u&=\left(  x\left(  n+e+3\right)  -\alpha-n_{1}-1\right)u-x(1-x)  \dfrac{\partial u}{\partial x}+xy\dfrac{\partial u}{\partial y}+xz\dfrac{\partial u}{\partial z},\\
\mathcal{N}_{4,0}^{+}u&=-nu+\dfrac{\left(  n_{2}+n_{3}\right)  u}{1-x}+x\dfrac{\partial u}{\partial x}-\dfrac{xy}{1-x}\dfrac{\partial u}{\partial y}-\dfrac{xz}{1-x}\dfrac{\partial u}{\partial z},  \\
\mathcal{N}_{5,0}u&=\left(  \left(  n+e+3\right)  x-\alpha \right)u-x(1-x)  \dfrac{\partial u}{\partial x}+xy\dfrac{\partial u}{\partial y}+xz\dfrac{\partial u}{\partial z}, \\
\mathcal{N}_{5,0}^{+}u&=nu+(1-x)  \dfrac{\partial u}{\partial x}-y\dfrac{\partial u}{\partial y}-z\dfrac{\partial u}{\partial z}, 
\end{align*}
\begin{align*}
\mathcal{N}_{6,0}u&=\alpha u+\dfrac{\left(  n_{2}+n_{3}\right)  xu}{1-x}+x\dfrac{\partial u}{\partial x}-\dfrac{xy}{1-x}\dfrac{\partial u}{\partial y}-\dfrac{xz}{1-x}\dfrac{\partial u}{\partial z},\\
\mathcal{N}_{6,0}^{+}u&=\left(  n_{2}+n_{3}+e-\alpha+2\right)  u-\left(1-x\right)  \dfrac{\partial u}{\partial x}+y\dfrac{\partial u}{\partial y}+z\dfrac{\partial u}{\partial z},
\end{align*}
as well as
\begin{align*}
\mathcal{O}_{1,0}u&=\dfrac{\partial u}{\partial z},\\
\mathcal{O}_{1,0}^{+}u&=\left(  z\delta-w\gamma \right)  u-zw\dfrac{\partial u}{\partial z},\\
\mathcal{O}_{2,0}u&=\left(  \delta+\gamma+n_{3}+1\right)  u+z\dfrac{\partial u}{\partial z},\\
\mathcal{O}_{2,0}^{+}u&=\left(  \delta+\dfrac{n_{3}w}{1-x-y}\right)u-\dfrac{zw}{1-x-y}\dfrac{\partial u}{\partial z},\\
\mathcal{O}_{3,0}u&=\left(  \delta+\gamma+n_{3}+1\right)  u-w\dfrac{\partial u}{\partial z},\\
\mathcal{O}_{3,0}^{+}u&=\left(  \gamma+\dfrac{n_{3}z}{1-x-y}\right)u+\dfrac{zw}{1-x-y}\dfrac{\partial u}{\partial z},\\
\mathcal{O}_{4,0}u&=\left(  z\delta-w\left(  \gamma+n_{3}+1\right)  \right)u-zw\dfrac{\partial u}{\partial z},\\
\mathcal{O}_{4,0}^{+}u&=-\dfrac{n_{3}u}{1-x-y}+\dfrac{z}{1-x-y}\dfrac{\partial u}{\partial z},\\
\mathcal{O}_{5,0}u&=\left(  \left(  \delta+n_{3}+1\right)  z-w\gamma \right) u-zw\dfrac{\partial u}{\partial z},\\
\mathcal{O}_{5,0}^{+}u&=\dfrac{n_{3}u}{1-x-y}+\dfrac{w}{1-x-y}\dfrac{\partial u}{\partial z},\\
\mathcal{O}_{6,0}u&=\gamma u+z\dfrac{\partial u}{\partial z},\\
\mathcal{O}_{6,0}^{+}u&=\delta u-w\dfrac{\partial u}{\partial z},
\end{align*}
where $n_{1}+n_{2}+n_{3}=n$, $\alpha+\beta+\gamma+\delta+a+b=e$ and $w=1-x-y-z$.

The differential relations above are chosen so that the recurrence relations in the next theorem are obtained for $\mathcal{N}_{i,0}$ (or $\mathcal{N}_{i,0}^{+})$ and $\mathcal{N}_{0,i}$ (or $\mathcal{N}_{0,i}^{+})$ by applying $\mathcal{M}_{i,0}$ (or $\mathcal{M}_{i,0}^{+}$) and $\mathcal{M}_{0,i}$ (or $\mathcal{M}_{0,i}^{+}$) to the first part
\begin{multline*}
P_{n_{1}+n_{2},n_{2}}^{\left( \alpha,\beta,\gamma+\delta+2n_{3}+b+1,a \right)}{(x,y)}\\
:=P_{n_{1}}^{\left(  \beta+\gamma+\delta+a+b+2n_{2}+2n_{3}+2,\alpha \right)}\left(  2x-1\right)  (1-x)  ^{n_{2}}P_{n_{2}}^{\left(\gamma+\delta+2n_{3}+b+1,\beta \right)  }\left(  \frac{2y}{1-x}-1\right)  ,
\end{multline*}
respectively, and for $\mathcal{O}_{i,0}$ (or $\mathcal{O}_{i,0}^{+})$ by applying $\widetilde{\mathcal{L}}_{i}$ (or $\widetilde{\mathcal{L}}_{i}^{+}$) to the second part $P_{n_{3}}^{\left(  \delta,\gamma \right)  }\left(  \frac {2z}{1-x-y}-1\right)  $ in the polynomial $P_{n_{1},n_{2},n_{3}}^{\left(\alpha,\beta,\gamma,\delta,a,b\right)  }(x,y,z)  $ for $1\leq i\leq6$.

\begin{theorem}\label{Theorem 1} The following sparse recurrence relations for $P_{n_{1},n_{2},n_{3}}^{(\alpha,\beta,\gamma,\delta,a,b)  }(x,y,z)$  hold true
\begin{align*}
\mathcal{N}_{0,1}P_{n_{1},n_{2},n_{3}}^{(\alpha,\beta,\gamma,\delta,a,b)  }(x,y,z)   &  =\left(  n_{2}+2n_{3}+\beta+\gamma+\delta+b+2\right)  P_{n_{1},n_{2}-1,n_{3}}^{\left(  \alpha,\beta+1,\gamma,\delta,a,b+1\right)  }(x,y,z), \\
\mathcal{N}_{0,1}^{+}P_{n_{1},n_{2},n_{3}}^{(\alpha,\beta,\gamma,\delta,a,b)  }(x,y,z)   &  =\left(  n_{2}+1\right)P_{n_{1},n_{2}+1,n_{3}}^{\left(  \alpha,\beta-1,\gamma,\delta,a,b-1\right)}(x,y,z),\\
\mathcal{N}_{0,2}P_{n_{1},n_{2},n_{3}}^{(\alpha,\beta,\gamma,\delta,a,b)  }(x,y,z)   &  =\left(  n_{2}+2n_{3}+\beta+\gamma+\delta+b+2\right)  P_{n_{1},n_{2},n_{3}}^{\left(  \alpha,\beta,\gamma,\delta,a-1,b+1\right)  }(x,y,z), \\
\mathcal{N}_{0,2}^{+}P_{n_{1},n_{2},n_{3}}^{(\alpha,\beta,\gamma,\delta,a,b)  }(x,y,z)   &  =\left(  n_{2}+2n_{3}+\gamma+\delta+b+1\right)  P_{n_{1},n_{2},n_{3}}^{\left(  \alpha,\beta,\gamma,\delta,a+1,b-1\right)  }(x,y,z),
\end{align*}
\begin{align*}
\mathcal{N}_{0,3}P_{n_{1},n_{2},n_{3}}^{(\alpha,\beta,\gamma,\delta,a,b)  }(x,y,z)   &  =\left(  n_{2}+2n_{3}+\beta+\gamma+\delta+b+2\right)  P_{n_{1},n_{2},n_{3}}^{\left(  \alpha,\beta+1,\gamma,\delta,a-1,b\right)  }(x,y,z), \\
\mathcal{N}_{0,3}^{+}P_{n_{1},n_{2},n_{3}}^{(\alpha,\beta,\gamma,\delta,a,b)  }(x,y,z)   &  =\left(  n_{2}+\beta \right)
P_{n_{1},n_{2},n_{3}}^{\left(  \alpha,\beta-1,\gamma,\delta,a+1,b\right)}(x,y,z), \\
\mathcal{N}_{0,4}P_{n_{1},n_{2},n_{3}}^{(\alpha,\beta,\gamma,\delta,a,b)  }(x,y,z)   &  =\left(  n_{2}+1\right)P_{n_{1},n_{2}+1,n_{3}}^{\left(  \alpha,\beta,\gamma,\delta,a-1,b-1\right)}(x,y,z), \\
\mathcal{N}_{0,4}^{+}P_{n_{1},n_{2},n_{3}}^{(\alpha,\beta,\gamma,\delta,a,b)  }(x,y,z)   &  =\left(  n_{2}+\beta \right)
P_{n_{1},n_{2}-1,n_{3}}^{\left(  \alpha,\beta,\gamma,\delta,a+1,b+1\right)}(x,y,z), \\
\mathcal{N}_{0,5}P_{n_{1},n_{2},n_{3}}^{(\alpha,\beta,\gamma,\delta,a,b)  }(x,y,z)   &  =\left(  n_{2}+1\right)P_{n_{1},n_{2}+1,n_{3}}^{\left(  \alpha,\beta-1,\gamma,\delta,a-1,b\right)}(x,y,z), \\
\mathcal{N}_{0,5}^{+}P_{n_{1},n_{2},n_{3}}^{(\alpha,\beta,\gamma,\delta,a,b)  }(x,y,z)   &  =\left(  n_{2}+2n_{3}+\gamma+\delta+b+1\right)  P_{n_{1},n_{2}-1,n_{3}}^{\left(  \alpha,\beta+1,\gamma,\delta,a+1,b\right)  }(x,y,z),\\
\mathcal{N}_{0,6}P_{n_{1},n_{2},n_{3}}^{(\alpha,\beta,\gamma,\delta,a,b)  }(x,y,z)   &  =\left(  n_{2}+\beta \right)P_{n_{1},n_{2},n_{3}}^{\left(  \alpha,\beta-1,\gamma,\delta,a,b+1\right)}(x,y,z), \\
\mathcal{N}_{0,6}^{+}P_{n_{1},n_{2},n_{3}}^{(\alpha,\beta,\gamma,\delta,a,b)  }(x,y,z)   &  =\left(  n_{2}+2n_{3}+\gamma+\delta+b+1\right)  P_{n_{1},n_{2},n_{3}}^{\left(  \alpha,\beta+1,\gamma,\delta,a,b-1\right)  }(x,y,z).
\end{align*}
Moreover,
\begin{align*}
\mathcal{N}_{1,0}P_{n_{1},n_{2},n_{3}}^{(\alpha,\beta,\gamma,\delta,a,b)  }(x,y,z)   &  =\left(  n+n_{2}+n_{3}+e+3\right)  P_{n_{1}-1,n_{2},n_{3}}^{\left(  \alpha+1,\beta,\gamma,\delta,a+1,b\right)  }(x,y,z), \\
\mathcal{N}_{1,0}^{+}P_{n_{1},n_{2},n_{3}}^{(\alpha,\beta,\gamma,\delta,a,b)  }(x,y,z)   &  =\left(  n_{1}+1\right)P_{n_{1}+1,n_{2},n_{3}}^{\left(  \alpha-1,\beta,\gamma,\delta,a-1,b\right)}(x,y,z), \\
\mathcal{N}_{2,0}P_{n_{1},n_{2},n_{3}}^{(\alpha,\beta,\gamma,\delta,a,b)  }(x,y,z)   &  =\left(  n+n_{2}+n_{3}+e+3\right)  P_{n_{1},n_{2},n_{3}}^{\left(  \alpha,\beta,\gamma,\delta,a+1,b\right)  }(x,y,z), \\
\mathcal{N}_{2,0}^{+}P_{n_{1},n_{2},n_{3}}^{(\alpha,\beta,\gamma,\delta,a,b)  }(x,y,z)   &  =\left(  n+n_{2}+n_{3}+e-\alpha+2\right)  P_{n_{1},n_{2},n_{3}}^{\left(  \alpha,\beta,\gamma,\delta,a-1,b\right)  }(x,y,z), \\
\mathcal{N}_{3,0}P_{n_{1},n_{2},n_{3}}^{(\alpha,\beta,\gamma,\delta,a,b)  }(x,y,z)   &  =\left(  n+n_{2}+n_{3}+e+3\right)  P_{n_{1},n_{2},n_{3}}^{\left(  \alpha+1,\beta,\gamma,\delta,a,b\right)  }(x,y,z), \\
\mathcal{N}_{3,0}^{+}P_{n_{1},n_{2},n_{3}}^{(\alpha,\beta,\gamma,\delta,a,b)  }(x,y,z)   &  =\left(  n_{1}+\alpha \right)P_{n_{1},n_{2},n_{3}}^{\left(  \alpha-1,\beta,\gamma,\delta,a,b\right)}(x,y,z), \\
\mathcal{N}_{4,0}P_{n_{1},n_{2},n_{3}}^{(\alpha,\beta,\gamma,\delta,a,b)  }(x,y,z)   &  =\left(  n_{1}+1\right)P_{n_{1}+1,n_{2},n_{3}}^{\left(  \alpha,\beta,\gamma,\delta,a-1,b\right)}(x,y,z), \\
\mathcal{N}_{4,0}^{+}P_{n_{1},n_{2},n_{3}}^{(\alpha,\beta,\gamma,\delta,a,b)  }(x,y,z)   &  =\left(  n_{1}+\alpha \right)P_{n_{1}-1,n_{2},n_{3}}^{\left(  \alpha,\beta,\gamma,\delta,a+1,b\right)}(x,y,z), \\
\mathcal{N}_{5,0}P_{n_{1},n_{2},n_{3}}^{(\alpha,\beta,\gamma,\delta,a,b)  }(x,y,z)   &  =\left(  n_{1}+1\right)P_{n_{1}+1,n_{2},n_{3}}^{\left(  \alpha-1,\beta,\gamma,\delta,a,b\right)}(x,y,z), \\
\mathcal{N}_{5,0}^{+}P_{n_{1},n_{2},n_{3}}^{(\alpha,\beta,\gamma,\delta,a,b)  }(x,y,z)   &  =\left(  n+n_{2}+n_{3}+e-\alpha+2\right)  P_{n_{1}-1,n_{2},n_{3}}^{\left(  \alpha+1,\beta,\gamma,\delta,a,b\right)  }(x,y,z), \\
\mathcal{N}_{6,0}P_{n_{1},n_{2},n_{3}}^{(\alpha,\beta,\gamma,\delta,a,b)  }(x,y,z)   &  =\left(  n_{1}+\alpha \right)P_{n_{1},n_{2},n_{3}}^{\left(  \alpha-1,\beta,\gamma,\delta,a+1,b\right)}(x,y,z), \\
\mathcal{N}_{6,0}^{+}P_{n_{1},n_{2},n_{3}}^{(\alpha,\beta,\gamma,\delta,a,b)  }(x,y,z)   &  =\left(  n+n_{2}+n_{3}+e-\alpha+2\right)  P_{n_{1},n_{2},n_{3}}^{\left(  \alpha+1,\beta,\gamma,\delta,a-1,b\right)  }(x,y,z).
\end{align*}
Also,
\begin{align*}
\mathcal{O}_{1,0}P_{n_{1},n_{2},n_{3}}^{(\alpha,\beta,\gamma,\delta,a,b)  }(x,y,z)   &  =\left(  n_{3}+\delta+\gamma+1\right)  P_{n_{1},n_{2},n_{3}-1}^{\left(  \alpha,\beta,\gamma+1,\delta+1,a,b\right)  }(x,y,z), \\
\mathcal{O}_{1,0}^{+}P_{n_{1},n_{2},n_{3}}^{(\alpha,\beta,\gamma,\delta,a,b)  }(x,y,z)   &  =\left(  n_{3}+1\right)P_{n_{1},n_{2},n_{3}+1}^{\left(  \alpha,\beta,\gamma-1,\delta-1,a,b\right)}(x,y,z), \\
\mathcal{O}_{2,0}P_{n_{1},n_{2},n_{3}}^{(\alpha,\beta,\gamma,\delta,a,b)  }(x,y,z)   &  =\left(  n_{3}+\delta+\gamma+1\right)  P_{n_{1},n_{2},n_{3}}^{\left(  \alpha,\beta,\gamma,\delta+1,a,b-1\right)  }(x,y,z), \\
\mathcal{O}_{2,0}^{+}P_{n_{1},n_{2},n_{3}}^{(\alpha,\beta,\gamma,\delta,a,b)  }(x,y,z)   &  =\left(  n_{3}+\delta \right)P_{n_{1},n_{2},n_{3}}^{\left(  \alpha,\beta,\gamma,\delta-1,a,b+1\right)}(x,y,z), \\
\mathcal{O}_{3,0}P_{n_{1},n_{2},n_{3}}^{(\alpha,\beta,\gamma,\delta,a,b)  }(x,y,z)   &  =\left(  n_{3}+\delta+\gamma+1\right)  P_{n_{1},n_{2},n_{3}}^{\left(  \alpha,\beta,\gamma+1,\delta,a,b-1\right)  }(x,y,z), \\
\mathcal{O}_{3,0}^{+}P_{n_{1},n_{2},n_{3}}^{(\alpha,\beta,\gamma,\delta,a,b)  }(x,y,z)   &  =\left(  n_{3}+\gamma \right)P_{n_{1},n_{2},n_{3}}^{\left(  \alpha,\beta,\gamma-1,\delta,a,b+1\right)}(x,y,z), \\
\mathcal{O}_{4,0}P_{n_{1},n_{2},n_{3}}^{(\alpha,\beta,\gamma,\delta,a,b)  }(x,y,z)   &  =\left(  n_{3}+1\right)P_{n_{1},n_{2},n_{3}+1}^{\left(  \alpha,\beta,\gamma,\delta-1,a,b-1\right)}(x,y,z), \\
\mathcal{O}_{4,0}^{+}P_{n_{1},n_{2},n_{3}}^{(\alpha,\beta,\gamma,\delta,a,b)  }(x,y,z)   &  =\left(  n_{3}+\gamma \right)P_{n_{1},n_{2},n_{3}-1}^{\left(  \alpha,\beta,\gamma,\delta+1,a,b+1\right)}(x,y,z), \\
\mathcal{O}_{5,0}P_{n_{1},n_{2},n_{3}}^{(\alpha,\beta,\gamma,\delta,a,b)  }(x,y,z)   &  =\left(  n_{3}+1\right)P_{n_{1},n_{2},n_{3}+1}^{\left(  \alpha,\beta,\gamma-1,\delta,a,b-1\right)}(x,y,z), \\
\mathcal{O}_{5,0}^{+}P_{n_{1},n_{2},n_{3}}^{(\alpha,\beta,\gamma,\delta,a,b)  }(x,y,z)   &  =\left(  n_{3}+\delta \right)P_{n_{1},n_{2},n_{3}-1}^{\left(  \alpha,\beta,\gamma+1,\delta,a,b+1\right)}(x,y,z), \\
\mathcal{O}_{6,0}P_{n_{1},n_{2},n_{3}}^{(\alpha,\beta,\gamma,\delta,a,b)  }(x,y,z)   &  =\left(  n_{3}+\gamma \right)P_{n_{1},n_{2},n_{3}}^{\left(  \alpha,\beta,\gamma-1,\delta+1,a,b\right)}(x,y,z), \\
\mathcal{O}_{6,0}^{+}P_{n_{1},n_{2},n_{3}}^{(\alpha,\beta,\gamma,\delta,a,b)  }(x,y,z)   &  =\left(  n_{3}+\delta \right)P_{n_{1},n_{2},n_{3}}^{\left(  \alpha,\beta,\gamma+1,\delta-1,a,b\right)}(x,y,z),
\end{align*}
where $n_{1}+n_{2}+n_{3}=n$ and $\alpha+\beta+\gamma+\delta+a+b=e$.
\end{theorem}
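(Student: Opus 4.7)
The plan is to exploit the factorization
$$P_{n_{1},n_{2},n_{3}}^{(\alpha,\beta,\gamma,\delta,a,b)}(x,y,z) = A(x,y)\,B(x,y,z),$$
where $A(x,y) = P_{n_{1}+n_{2},\,n_{2}}^{(\alpha,\,\beta,\,\gamma+\delta+2n_{3}+b+1,\,a)}(x,y)$ is a bivariate polynomial of type \eqref{B} with bivariate indices $n = n_{1}+n_{2}$, $k = n_{2}$ and parameters $a' = \alpha$, $b' = \beta$, $c' = \gamma+\delta+2n_{3}+b+1$, $d' = a$, and $B(x,y,z) = (1-x-y)^{n_{3}}\widetilde{P}_{n_{3}}^{(\delta,\gamma)}\bigl(z/(1-x-y)\bigr)$. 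As suggested in the paragraph just before the theorem, the $\mathcal{N}$-operators are designed so that their action on $AB$ reduces to the corresponding bivariate $\mathcal{M}$-operator applied to $A$, and the $\mathcal{O}$-operators so that their action reduces to the univariate $\widetilde{\mathcal{L}}$-operator applied to the shifted Jacobi factor of $B$.

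For the 24 $\mathcal{N}$-identities, the central step is to establish the product identities
$$\mathcal{N}_{i,0}(AB) = B\,\mathcal{M}_{i,0}A, \qquad \mathcal{N}_{0,i}(AB) = B\,\mathcal{M}_{0,i}A,$$
together with the analogous ones for the $+$-adjoints. Once these are in hand, each of the twelve bivariate sparse recurrences from Section~\ref{sec:2} yields the claimed $\mathcal{N}$-identity directly: one substitutes the bivariate parameters $(n,k,a',b',c',d')$, computes the scalar prefactor, and identifies $B$ times the new bivariate polynomial as the new six-parameter polynomial. For example, $n+k+a'+b'+c'+d'+2 = n+n_{2}+n_{3}+e+3$ produces the stated coefficient in the $\mathcal{N}_{1,0}$-case. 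The product identity itself is verified by a chain-rule calculation: setting $t = z/(1-x-y)$ and $Q := (1-x-y)^{n_{3}-1}(\widetilde{P}_{n_{3}}^{(\delta,\gamma)})'(t)$, one finds $B_{x} = B_{y} = -\tfrac{n_{3}}{1-x-y}B + \tfrac{z}{1-x-y}Q$ and $B_{z} = Q$. For $\mathcal{N}_{1,0}$, the $A$-part of $\mathcal{N}_{1,0}(AB)$ collapses to $-\tfrac{n_{3}}{1-x}AB$, which precisely cancels the excess $n_{3}/(1-x)$ piece in $\mathcal{N}_{1,0}$ (its coefficient is $n_{2}+n_{3}$ whereas $\mathcal{M}_{1,0}$ has coefficient $k=n_{2}$), leaving exactly $B\,\mathcal{M}_{1,0}A$. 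The other twenty-three $\mathcal{N}$-cases rely on the same cancellation mechanism by construction.

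For the 12 $\mathcal{O}$-identities, the identity $\mathcal{O}_{i,0}(AB) = A\,\mathcal{O}_{i,0}B$ is immediate because each $\mathcal{O}$-operator differentiates only in $z$ and multiplies by functions that leave $A(x,y)$ untouched. A chain-rule computation using $z = t(1-x-y)$ and $w = (1-t)(1-x-y)$ then shows that $\mathcal{O}_{i,0}B$ factors as a power of $(1-x-y)$ times $\widetilde{\mathcal{L}}_{i}\widetilde{P}_{n_{3}}^{(\delta,\gamma)}(t)$ with parameters $(\delta,\gamma,n_{3})$; for instance, $\mathcal{O}_{4,0}B = (1-x-y)^{n_{3}+1}\widetilde{\mathcal{L}}_{4,\delta,\gamma,n_{3}}\widetilde{P}_{n_{3}}^{(\delta,\gamma)}(t) = (n_{3}+1)(1-x-y)^{n_{3}+1}\widetilde{P}_{n_{3}+1}^{(\delta-1,\gamma)}(t)$. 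The twelve univariate sparse recurrences from Section~\ref{sec:2} then supply the shifts in $(\gamma,\delta,n_{3})$; the paired shifts in $(a,b)$ on the right-hand side are forced by the requirement that the combinations $\gamma+\delta+2n_{3}+b+1$ and $\beta+\gamma+\delta+a+b+2n_{2}+2n_{3}+2$ appearing inside $A$ be preserved, which keeps $A$ unchanged and makes the output a genuine six-parameter polynomial.

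The only real difficulty is the sheer bookkeeping: for each of the 36 relations one must establish the relevant product identity and then verify the scalar prefactor and parameter shifts. The representative cases $\mathcal{N}_{1,0}$ and $\mathcal{O}_{4,0}$ treated above illustrate the mechanism, and the remaining cases are entirely analogous.
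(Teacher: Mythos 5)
Your proposal is correct and follows essentially the same route as the paper: both factor $P_{n_{1},n_{2},n_{3}}^{(\alpha,\beta,\gamma,\delta,a,b)}(x,y,z)$ as the bivariate polynomial $P_{n_{1}+n_{2},n_{2}}^{(\alpha,\beta,\gamma+\delta+2n_{3}+b+1,a)}(x,y)$ times $(1-x-y)^{n_{3}}\widetilde{P}_{n_{3}}^{(\delta,\gamma)}\bigl(z/(1-x-y)\bigr)$ and reduce each $\mathcal{N}$- or $\mathcal{O}$-relation to the corresponding bivariate $\mathcal{M}$- or univariate $\widetilde{\mathcal{L}}$-relation from Section~\ref{sec:2}. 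Your write-up is in fact more explicit than the paper's (which verifies only the $\mathcal{N}_{0,1}$ case and appeals to analogy for the rest), since you spell out the chain-rule cancellation and the parameter bookkeeping that keeps the bivariate factor intact.
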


\begin{proof}
Let us denote
\[
P_{n_{1},n_{2},n_{3}}^{(\alpha,\beta,\gamma,\delta,a,b)  }(x,y,z)  :=P_{n_{1}+n_{2},n_{2}}^{\left(  \alpha,\beta,\gamma+\delta+2n_{3}+b+1,a\right)  }\left(  x,y\right)  \left(  1-x-y\right)^{n_{3}}P_{n_{3}}^{\left(  \delta,\gamma \right)  }\left(  \frac{2z}{1-x-y}-1\right).
\]
If we apply $\mathcal{M}_{0,1}$ to the both sides of $P_{n_{1},n_{2},n_{3}}^{(\alpha,\beta,\gamma,\delta,a,b)  }(x,y,z)$, we have
\[
\mathcal{N}_{0,1}P_{n_{1},n_{2},n_{3}}^{(\alpha,\beta,\gamma,\delta,a,b)  }(x,y,z)  =\left(  1-x-y\right)  ^{n_{3}}P_{n_{3}}^{\left(  \delta,\gamma \right)  }\left(  \frac{2z}{1-x-y}-1\right)\mathcal{M}_{0,1}P_{n_{1}+n_{2},n_{2}}^{\left(  \alpha,\beta,\gamma+\delta+2n_{3}+b+1,a\right)  }\left(  x,y\right)  .
\]
From the relation
\[
\mathcal{M}_{0,1}P_{n,k}^{(a,b,c,d)}\left(  x,y\right)=\left(  k+b+c+1\right)  P_{n-1,k-1}^{\left(  a,b+1,c+1,d\right)  }\left(x,y\right),
\]
it follows
\[
\mathcal{N}_{0,1}P_{n_{1},n_{2},n_{3}}^{(\alpha,\beta,\gamma,\delta,a,b)  }(x,y,z)  =\left(  n_{2}+2n_{3}+\beta+\gamma+\delta+b+2\right)  P_{n_{1},n_{2}-1,n_{3}}^{\left(  \alpha,\beta+1,\gamma,\delta,a,b+1\right)  }(x,y,z).
\]
Similarly, in view of $\mathcal{M}_{i,0},\mathcal{M}_{i,0}^{+},\mathcal{M}_{0,i},\mathcal{M}_{0,i}^{+},\widetilde{\mathcal{L}}_{i}$ and$\  \widetilde {\mathcal{L}}_{i}^{+}, 1\leq i\leq6,$ the other recurrence relations are obtained.
\end{proof}
{}From these relations we have
\begin{align*}
\mathcal{N}_{0,1}^{+}\mathcal{N}_{0,1}P_{n_{1},n_{2},n_{3}}^{\left(\alpha,\beta-1,\gamma,\delta,a,b-1\right)  }(x,y,z)   &=n_{2}\left(  n_{2}+2n_{3}+\beta+\gamma+\delta+b\right)  P_{n_{1},n_{2},n_{3}}^{\left(  \alpha,\beta-1,\gamma,\delta,a,b-1\right)  }(x,y,z),\\
\mathcal{N}_{0,1}\mathcal{N}_{0,1}^{+}P_{n_{1},n_{2},n_{3}}^{\left(\alpha,\beta,\gamma,\delta,a,b\right)  }(x,y,z)   &  =\left(n_{2}+1\right)  \left(  n_{2}+2n_{3}+\beta+\gamma+\delta+b+1\right)P_{n_{1},n_{2},n_{3}}^{(\alpha,\beta,\gamma,\delta,a,b)  }(x,y,z),  \\
\mathcal{N}_{0,2}^{+}\mathcal{N}_{0,2}P_{n_{1},n_{2},n_{3}}^{\left(\alpha,\beta+1,\gamma,\delta,a,b-1\right)  }(x,y,z)  & =\left(n_{2}+2n_{3}+\beta+\gamma+\delta+b+2\right)  \left(  n_{2}+2n_{3}+\gamma+\delta+b+1\right) \\ & \times  P_{n_{1},n_{2},n_{3}}^{\left(  \alpha,\beta+1,\gamma,\delta,a,b-1\right)  }(x,y,z),  \\
\mathcal{N}_{0,2}\mathcal{N}_{0,2}^{+}P_{n_{1},n_{2},n_{3}}^{\left(\alpha,\beta+1,\gamma,\delta,a,b\right)  }(x,y,z)   &  =\left(n_{2}+2n_{3}+\beta+\gamma+\delta+b+2\right)  \left(  n_{2}+2n_{3}+\gamma+\delta+b+1\right) \\ & \times P_{n_{1},n_{2},n_{3}}^{\left(  \alpha,\beta+1,\gamma,\delta,a,b\right)  }(x,y,z),  \\
\mathcal{N}_{0,3}^{+}\mathcal{N}_{0,3}P_{n_{1},n_{2},n_{3}}^{\left(\alpha,\beta-1,\gamma,\delta,a,b+1\right)  }(x,y,z)   &  =\left(n_{2}+\beta \right)  \left(  n_{2}+2n_{3}+\beta+\gamma+\delta+b+2\right)P_{n_{1},n_{2},n_{3}}^{\left(  \alpha,\beta-1,\gamma,\delta,a,b+1\right)}(x,y,z),  \\
\mathcal{N}_{0,3}\mathcal{N}_{0,3}^{+}P_{n_{1},n_{2},n_{3}}^{\left(\alpha,\beta,\gamma,\delta,a,b+1\right)  }(x,y,z)   &  =\left(n_{2}+\beta \right)  \left(  n_{2}+2n_{3}+\beta+\gamma+\delta+b+2\right)P_{n_{1},n_{2},n_{3}}^{\left(  \alpha,\beta,\gamma,\delta,a,b+1\right)}(x,y,z),  \\
\mathcal{N}_{0,4}^{+}\mathcal{N}_{0,4}P_{n_{1},n_{2}-1,n_{3}}^{\left(\alpha,\beta+1,\gamma,\delta,a,b\right)  }(x,y,z)   &=n_{2}\left(  n_{2}+\beta+1\right)  P_{n_{1},n_{2}-1,n_{3}}^{\left(\alpha,\beta+1,\gamma,\delta,a,b\right)  }(x,y,z),  \\
\mathcal{N}_{0,4}\mathcal{N}_{0,4}^{+}P_{n_{1},n_{2},n_{3}}^{\left(\alpha,\beta+1,\gamma,\delta,a,b-1\right)  }(x,y,z)   &=n_{2}\left(  n_{2}+\beta+1\right)  P_{n_{1},n_{2},n_{3}}^{\left(\alpha,\beta+1,\gamma,\delta,a,b-1\right)  }(x,y,z),  \\
\mathcal{N}_{0,5}^{+}\mathcal{N}_{0,5}P_{n_{1},n_{2}-1,n_{3}}^{\left(\alpha,\beta,\gamma,\delta,a,b+1\right)  }(x,y,z)   &=n_{2}\left(  n_{2}+2n_{3}+\gamma+\delta+b+2\right)  P_{n_{1},n_{2}-1,n_{3}}^{\left(  \alpha,\beta,\gamma,\delta,a,b+1\right)  }(x,y,z), \\
\mathcal{N}_{0,5}\mathcal{N}_{0,5}^{+}P_{n_{1},n_{2},n_{3}}^{\left(\alpha,\beta-1,\gamma,\delta,a,b+1\right)  }(x,y,z)   &=n_{2}\left(  n_{2}+2n_{3}+\gamma+\delta+b+2\right)  P_{n_{1},n_{2},n_{3}}^{\left(  \alpha,\beta-1,\gamma,\delta,a,b+1\right)  }(x,y,z),\\
\mathcal{N}_{0,6}^{+}\mathcal{N}_{0,6}P_{n_{1},n_{2},n_{3}}^{\left(\alpha,\beta,\gamma,\delta,a,b-1\right)  }(x,y,z)   &  =\left(n_{2}+\beta\right)  \left(  n_{2}+2n_{3}+\gamma+\delta+b+1\right)P_{n_{1},n_{2},n_{3}}^{\left(  \alpha,\beta,\gamma,\delta,a,b-1\right)}(x,y,z),  \\
\mathcal{N}_{0,6}\mathcal{N}_{0,6}^{+}P_{n_{1},n_{2},n_{3}}^{\left(\alpha,\beta-1,\gamma,\delta,a,b\right)  }(x,y,z)   &  =\left(n_{2}+\beta\right)  \left(  n_{2}+2n_{3}+\gamma+\delta+b+1\right)P_{n_{1},n_{2},n_{3}}^{\left(  \alpha,\beta-1,\gamma,\delta,a,b\right)}(x,y,z),
\end{align*}
and
\begin{align*}
\mathcal{N}_{1,0}^{+}\mathcal{N}_{1,0}P_{n_{1},n_{2},n_{3}}^{\left(\alpha-1,\beta,\gamma,\delta,a,b-1\right)  }(x,y,z)   &=n_{1}\left(  n+n_{2}+n_{3}+e+1\right)  P_{n_{1},n_{2},n_{3}}^{\left(\alpha-1,\beta,\gamma,\delta,a,b-1\right)  }(x,y,z),  \\
\mathcal{N}_{1,0}\mathcal{N}_{1,0}^{+}P_{n_{1},n_{2},n_{3}}^{\left(\alpha,\beta,\gamma,\delta,a,b\right)  }(x,y,z)   &  =\left(n_{1}+1\right)  \left(  n+n_{2}+n_{3}+e+2\right)  P_{n_{1},n_{2},n_{3}}^{(\alpha,\beta,\gamma,\delta,a,b)  }(x,y,z),   \\
\mathcal{N}_{2,0}^{+}\mathcal{N}_{2,0}P_{n_{1},n_{2},n_{3}}^{\left(\alpha+1,\beta,\gamma,\delta,a,b-1\right)  }(x,y,z)   &  =\left(n+n_{2}+n_{3}+e+3\right)  \left(  n+n_{2}+n_{3}+e-\alpha+2\right) \\ & \times P_{n_{1},n_{2},n_{3}}^{\left(  \alpha+1,\beta,\gamma,\delta,a,b-1\right)}(x,y,z),
\end{align*}
\begin{align*}
\mathcal{N}_{2,0}\mathcal{N}_{2,0}^{+}P_{n_{1},n_{2},n_{3}}^{\left(\alpha+1,\beta,\gamma,\delta,a,b\right)  }(x,y,z)   &  =\left(n+n_{2}+n_{3}+e+3\right)  \left(  n+n_{2}+n_{3}+e-\alpha+2\right) \\ & \times P_{n_{1},n_{2},n_{3}}^{\left(  \alpha+1,\beta,\gamma,\delta,a,b\right)}(x,y,z),  \\
\mathcal{N}_{3,0}^{+}\mathcal{N}_{3,0}P_{n_{1},n_{2},n_{3}}^{\left(\alpha-1,\beta,\gamma,\delta,a+1,b\right)  }(x,y,z)   &  =\left(n_{1}+\alpha \right)  \left(  n+n_{2}+n_{3}+e+3\right)  P_{n_{1},n_{2},n_{3}}^{\left(  \alpha-1,\beta,\gamma,\delta,a+1,b\right)  }(x,y,z),\\
\mathcal{N}_{3,0}\mathcal{N}_{3,0}^{+}P_{n_{1},n_{2},n_{3}}^{\left(\alpha,\beta,\gamma,\delta,a+1,b\right)  }(x,y,z)   &  =\left(n_{1}+\alpha \right)  \left(  n+n_{2}+n_{3}+e+3\right)  P_{n_{1},n_{2},n_{3}}^{\left(  \alpha,\beta,\gamma,\delta,a+1,b\right)  }(x,y,z),  \\
\mathcal{N}_{4,0}^{+}\mathcal{N}_{4,0}P_{n_{1}-1,n_{2},n_{3}}^{\left(\alpha+1,\beta,\gamma,\delta,a,b\right)  }(x,y,z)   &=n_{1}\left(  n_{1}+\alpha+1\right)  P_{n_{1}-1,n_{2},n_{3}}^{\left(\alpha+1,\beta,\gamma,\delta,a,b\right)  }(x,y,z) , \\
\mathcal{N}_{4,0}\mathcal{N}_{4,0}^{+}P_{n_{1},n_{2},n_{3}}^{\left(\alpha+1,\beta,\gamma,\delta,a,b-1\right)  }(x,y,z)   &=n_{1}\left(  n_{1}+\alpha+1\right)  P_{n_{1},n_{2},n_{3}}^{\left(\alpha+1,\beta,\gamma,\delta,a,b-1\right)  }(x,y,z),  \\
\mathcal{N}_{5,0}^{+}\mathcal{N}_{5,0}P_{n_{1}-1,n_{2},n_{3}}^{\left(\alpha,\beta,\gamma,\delta,a+1,b\right)  }(x,y,z)   &  =n_{1}  \left(  n+n_{2}+n_{3}+e-\alpha+3\right)  P_{n_{1}-1,n_{2},n_{3}}^{\left(  \alpha,\beta,\gamma,\delta,a+1,b\right)  }(x,y,z),\\
\mathcal{N}_{5,0}\mathcal{N}_{5,0}^{+}P_{n_{1},n_{2},n_{3}}^{\left(\alpha-1,\beta,\gamma,\delta,a+1,b\right)  }(x,y,z)   &  =n_{1}  \left(  n+n_{2}+n_{3}+e-\alpha+3\right)  P_{n_{1},n_{2},n_{3}}^{\left(  \alpha-1,\beta,\gamma,\delta,a+1,b\right)  }(x,y,z),  \\
\mathcal{N}_{6,0}^{+}\mathcal{N}_{6,0}P_{n_{1},n_{2},n_{3}}^{\left(\alpha,\beta,\gamma,\delta,a,b-1\right)  }(x,y,z)   &  =\left(n_{1}+\alpha \right)  \left(  n+n_{2}+n_{3}+e-\alpha+2\right)  P_{n_{1},n_{2},n_{3}}^{\left(  \alpha,\beta,\gamma,\delta,a,b-1\right)  }(x,y,z),  \\
\mathcal{N}_{6,0}\mathcal{N}_{6,0}^{+}P_{n_{1},n_{2},n_{3}}^{\left(\alpha-1,\beta,\gamma,\delta,a,b\right)  }(x,y,z)   &  =\left(n_{1}+\alpha \right)  \left(  n+n_{2}+n_{3}+e-\alpha+2\right)  P_{n_{1},n_{2},n_{3}}^{\left(  \alpha-1,\beta,\gamma,\delta,a,b\right)  }(x,y,z),
\end{align*}
as well as
\begin{align*}
\mathcal{O}_{1,0}^{+}\mathcal{O}_{1,0}P_{n_{1},n_{2},n_{3}}^{\left(\alpha,\beta,\gamma-1,\delta-1,a,b\right)  }(x,y,z)   &=n_{3}\left(  n_{3}+\gamma+\delta-1\right)  P_{n_{1},n_{2},n_{3}}^{\left(\alpha,\beta,\gamma-1,\delta-1,a,b\right)  }(x,y,z),  \\
\mathcal{O}_{1,0}\mathcal{O}_{1,0}^{+}P_{n_{1},n_{2},n_{3}}^{\left(\alpha,\beta,\gamma,\delta,a,b\right)  }(x,y,z)   &  =\left(n_{3}+1\right)  \left(  n_{3}+\gamma+\delta \right)  P_{n_{1},n_{2},n_{3}}^{(\alpha,\beta,\gamma,\delta,a,b)  }(x,y,z),  \\
\mathcal{O}_{2,0}^{+}\mathcal{O}_{2,0}P_{n_{1},n_{2},n_{3}}^{\left(\alpha,\beta,\gamma+1,\delta-1,a,b\right)  }(x,y,z)   &  =\left(n_{3}+\delta \right)  \left(  n_{3}+\gamma+\delta+1\right)  P_{n_{1},n_{2},n_{3}}^{\left(  \alpha,\beta,\gamma+1,\delta-1,a,b\right)  }(x,y,z),  \\
\mathcal{O}_{2,0}\mathcal{O}_{2,0}^{+}P_{n_{1},n_{2},n_{3}}^{\left(\alpha,\beta,\gamma+1,\delta,a,b\right)  }(x,y,z)   &  =\left(n_{3}+\delta \right)  \left(  n_{3}+\gamma+\delta+1\right)  P_{n_{1},n_{2},n_{3}}^{\left(  \alpha,\beta,\gamma+1,\delta,a,b\right)  }(x,y,z),  \\
\mathcal{O}_{3,0}^{+}\mathcal{O}_{3,0}P_{n_{1},n_{2},n_{3}}^{\left(\alpha,\beta,\gamma-1,\delta+1,a,b\right)  }(x,y,z)   &  =\left(n_{3}+\gamma \right)  \left(  n_{3}+\gamma+\delta+1\right)  P_{n_{1},n_{2},n_{3}}^{\left(  \alpha,\beta,\gamma-1,\delta+1,a,b\right)  }(x,y,z),  \\
\mathcal{O}_{3,0}\mathcal{O}_{3,0}^{+}P_{n_{1},n_{2},n_{3}}^{\left(\alpha,\beta,\gamma,\delta+1,a,b\right)  }(x,y,z)   &  =\left(n_{3}+\gamma \right)  \left(  n_{3}+\gamma+\delta+1\right)  P_{n_{1},n_{2},n_{3}}^{\left(  \alpha,\beta,\gamma,\delta+1,a,b\right)  }(x,y,z),  \\
\mathcal{O}_{4,0}^{+}\mathcal{O}_{4,0}P_{n_{1},n_{2},n_{3}-1}^{\left(\alpha,\beta,\gamma+1,\delta,a,b\right)  }(x,y,z)   &=n_{3}\left(  n_{3}+\gamma+1\right)  P_{n_{1},n_{2},n_{3}-1}^{\left(\alpha,\beta,\gamma+1,\delta,a,b\right)  }(x,y,z),  \\
\mathcal{O}_{4,0}\mathcal{O}_{4,0}^{+}P_{n_{1},n_{2},n_{3}}^{\left(\alpha,\beta,\gamma+1,\delta-1,a,b\right)  }(x,y,z)   &=n_{3}\left(  n_{3}+\gamma+1\right)  P_{n_{1},n_{2},n_{3}}^{\left(\alpha,\beta,\gamma+1,\delta-1,a,b\right)  }(x,y,z),  \\
\mathcal{O}_{5,0}^{+}\mathcal{O}_{5,0}P_{n_{1},n_{2},n_{3}-1}^{\left(\alpha,\beta,\gamma,\delta+1,a,b\right)  }(x,y,z)   &=n_{3}\left(  n_{3}+\delta+1\right)  P_{n_{1},n_{2},n_{3}-1}^{\left(\alpha,\beta,\gamma,\delta+1,a,b\right)  }(x,y,z),  \\
\mathcal{O}_{5,0}\mathcal{O}_{5,0}^{+}P_{n_{1},n_{2},n_{3}}^{\left(\alpha,\beta,\gamma-1,\delta+1,a,b\right)  }(x,y,z)   &=n_{3}\left(  n_{3}+\delta+1\right)  P_{n_{1},n_{2},n_{3}}^{\left(\alpha,\beta,\gamma-1,\delta+1,a,b\right)  }(x,y,z),  \\
\mathcal{O}_{6,0}^{+}\mathcal{O}_{6,0}P_{n_{1},n_{2},n_{3}}^{\left(\alpha,\beta,\gamma,\delta-1,a,b\right)  }(x,y,z)   &  =\left(n_{3}+\gamma \right)  \left(  n_{3}+\delta \right)  P_{n_{1},n_{2},n_{3}}^{\left(  \alpha,\beta,\gamma,\delta-1,a,b\right)  }(x,y,z),  \\
\mathcal{O}_{6,0}\mathcal{O}_{6,0}^{+}P_{n_{1},n_{2},n_{3}}^{\left(\alpha,\beta,\gamma-1,\delta,a,b\right)  }(x,y,z)   &  =\left(n_{3}+\gamma \right)  \left(  n_{3}+\delta \right)  P_{n_{1},n_{2},n_{3}}^{\left(  \alpha,\beta,\gamma-1,\delta,a,b\right)  }(x,y,z).
\end{align*}

\begin{theorem}
The polynomials $P_{n_{1},n_{2},n_{3}}^{(\alpha,\beta,\gamma,\delta,a,b)  }(x,y,z)  $ defined in \eqref{C} satisfy the following second order partial differential equations
\begin{multline}\label{AA1}
T_{1}u:= x(1-x)  u_{xx}+y(1-y)  u_{yy}+z(1-z)
u_{zz}-2xzu_{xz}-2yzu_{yz}-2xyu_{xy}\\
 +\left(  \alpha+1-\left(  \alpha+\beta+\gamma+\delta+a+b+4\right)  x\right)
u_{x}+\left(  \beta+1-\left(  \alpha+\beta+\gamma+\delta+4\right)
y+\frac{\left(  a+b\right)  xy-by}{1-x}\right)  u_{y}\\
 +\left(  \gamma+1-\left(  \alpha+\beta+\gamma+\delta+4\right)
z+\frac{\left(  a+b\right)  xz}{1-x}+\frac{byz}{(1-x)  \left(
1-x-y\right)  }\right)  u_{z}\\
 +\left(  n\left(  n+\alpha+\beta+\gamma+\delta+a+b+3\right)  -\frac
{a\left(  n_{2}+n_{3}\right)  }{1-x}-\frac{n_{3}b}{1-x-y}\right)  u=0,
\end{multline}

\begin{multline}\label{A2}
T_{2}u:=y\left(  1-x-y\right) u_{yy}-2yz u_{yz}+\dfrac{yz^{2}}{1-x-y}u_{zz}+\left(  \left(  \beta+1\right)  \left(  1-x-y\right)  -\left(  \gamma+\delta+b+2\right)  y\right)  u_{y}\\
+\dfrac{1}{1-x-y}\left(  \left(  \gamma+\delta+b+2\right)  yz-\left(\beta+1\right)  z\left(  1-x-y\right)  \right)  u_{z}\\
+\left(  \left(  \beta+1\right)  n_{3}+n_{2}\left(  n_{2}+2n_{3}+\beta+\gamma+\delta+b+2\right)  -\dfrac{n_{3}\left(  \gamma+\delta+b+n_{3}+1\right)  y}{1-x-y}\right)  u=0,
\end{multline}
\begin{equation}\label{A3}
T_{3}u:=z\left(  1-x-y-z\right)  u_{zz}+\left[  \left(  \gamma+1\right)  \left(1-x-y-z\right)  -\left(  \delta+1\right)  z\right]  u_{z}+n_{3}\left(n_{3}+\gamma+\delta+1\right)  u=0,
\end{equation}
and
\begin{multline}\label{A4}
T_{4}u:=x(1-x)  u_{xx}+\dfrac{xy^{2}}{1-x}u_{yy}+\dfrac{xz^{2}}{1-x}u_{zz}-2xy u_{xy}-2xz u_{xz}+\dfrac{2xyz}{1-x} u_{yz}\\
+\left(  \alpha+1-\left(  \alpha+\beta+\gamma+\delta+a+b+4\right)  x\right)u_{x}-\dfrac{y}{1-x}\left(  \alpha+1-\left(\alpha+\beta+\gamma+\delta+a+b+4\right)  x\right)  u_{y}\\
-\dfrac{z}{1-x}\left(  \alpha+1-\left(  \alpha+\beta+\gamma+\delta+a+b+4\right)  x\right)  u_{z}\\
+\left(  n\left(  n+\alpha+\beta+\gamma+\delta+a+b+3\right)  -\dfrac{\left(n_{2}+n_{3}\right)  \left(  n_{2}+n_{3}+\beta+\gamma+\delta+a+b+2\right)}{1-x}\right)  u=0.
\end{multline}
\end{theorem}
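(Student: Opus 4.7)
The plan is to derive each equation $T_{i}u=0$ by combining the first- and second-order operator identities developed in Theorem~\ref{Theorem 1} with the bivariate equations \eqref{A1}, \eqref{C11}, \eqref{B1} and the univariate shifted Jacobi equation. The key observation is the factorization
\[
u=P_{n_{1}+n_{2},n_{2}}^{(\alpha,\beta,\gamma+\delta+2n_{3}+b+1,a)}(x,y)\cdot(1-x-y)^{n_{3}}\widetilde{P}_{n_{3}}^{(\delta,\gamma)}\!\left(\frac{z}{1-x-y}\right),
\]
in which $z$-derivatives act only on the second factor, while $x$- and $y$-derivatives produce chain-rule contributions through $t:=z/(1-x-y)$.

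For $T_{3}u=0$, only $z$-derivatives appear, so the bivariate factor may be treated as constant; setting $t=z/(1-x-y)$ and multiplying through by $(1-x-y)^{2}$ reduces $T_{3}u=0$ to the classical shifted Jacobi equation for $\widetilde{P}_{n_{3}}^{(\delta,\gamma)}(t)$. Equivalently, $T_{3}$ is read off from one of the compositions $\mathcal{O}_{i,0}^{+}\mathcal{O}_{i,0}$ listed above, whose eigenvalue matches the zeroth-order coefficient $n_{3}(n_{3}+\gamma+\delta+1)$ after the appropriate parameter shift.

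For $T_{2}u=0$, I would apply equation \eqref{A1} to $P_{n_{1}+n_{2},n_{2}}^{(\alpha,\beta,\gamma+\delta+2n_{3}+b+1,a)}(x,y)$ with $n\to n_{1}+n_{2}$, $k\to n_{2}$, $c\to\gamma+\delta+2n_{3}+b+1$; this governs the $y$-dependence of the bivariate factor. The cross-terms $u_{yz}$, $u_{zz}$ and the $u_{z}$-correction in $T_{2}$ then arise via the chain-rule identities $\partial_{y}t=z/(1-x-y)^{2}$ and $\partial_{y}(1-x-y)^{n_{3}}=-n_{3}(1-x-y)^{n_{3}-1}$ applied to the $z$-factor, combined with $T_{3}$ to absorb the pure-$z$ part. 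Equivalently, $T_{2}u=0$ is obtained from the composition $\mathcal{N}_{0,1}^{+}\mathcal{N}_{0,1}$ (and its parameter-shifted variants) after rearrangement and multiplication by $y$ or $1-x-y$ as foreshadowed in the Remark following \eqref{B1}.

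The full equations $T_{1}u=0$ and $T_{4}u=0$ are derived analogously from \eqref{C11} and \eqref{B1}, respectively, with the same parameter substitutions for the bivariate factor. One expands $u=QR$ using Leibniz; the fractional terms $a(n_{2}+n_{3})/(1-x)$, $n_{3}b/(1-x-y)$ and $byz/((1-x)(1-x-y))$ are precisely the chain-rule residues produced by the $(1-x-y)^{n_{3}}$ and $\widetilde{P}_{n_{3}}^{(\delta,\gamma)}(z/(1-x-y))$ factors, together with the $d/(1-x)$ correction already present in \eqref{C11} after substituting $d\to a$ and $c\to\gamma+\delta+2n_{3}+b+1$. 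The main obstacle is bookkeeping: three variables, three overlapping denominators $1$, $1-x$, $1-x-y$, and parameter-dependent coefficients on both factors generate many intermediate terms, and one must verify -- mechanically but carefully -- that after collecting like derivatives and invoking $T_{2}$ and $T_{3}$ to eliminate the derivatives already known to be annihilated, the remaining coefficients assemble exactly into the stated forms of $T_{1}$ and $T_{4}$.
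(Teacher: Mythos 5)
Your strategy is sound and would succeed, but it is organized differently from the paper's. For $T_2$ and $T_3$ you and the paper essentially coincide: the paper reads these off from the tabulated second-order compositions $\mathcal{N}_{0,1}\mathcal{N}_{0,1}^{+}$ and $\mathcal{O}_{1,0}\mathcal{O}_{1,0}^{+}$, which is the "equivalent" route you name (your remark about multiplying by $(1-x-y)^{2}$ for $T_3$ is a harmless imprecision -- the substitution $t=z/(1-x-y)$ already clears the denominators). The divergence is in $T_1$ and $T_4$. The paper obtains $T_1$ not by re-expanding \eqref{C11} through the product structure, but as the explicit linear combination $-\mathcal{N}_{1,0}\mathcal{N}_{1,0}^{+}-\frac{1}{1-x}\mathcal{N}_{0,1}\mathcal{N}_{0,1}^{+}-\frac{1}{1-x-y}\mathcal{O}_{1,0}\mathcal{O}_{1,0}^{+}$ of operators whose action on $P_{n_1,n_2,n_3}^{(\alpha,\beta,\gamma,\delta,a,b)}$ is already known to be scalar, and then gets $T_4$ entirely for free from the identity $T_4u=T_1u-\frac{1}{1-x}T_2u-\frac{1}{1-x-y}T_3u$, mirroring how \eqref{B1} was produced from $L_2u-\frac{1}{1-x}L_1u$ in the bivariate case. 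Your plan instead derives $T_1$ and $T_4$ independently by Leibniz expansion of $u=QR$ against \eqref{C11} and \eqref{B1}; this is more elementary and self-contained, but it forces you to redo for $T_4$ the same chain-rule bookkeeping twice and to verify by hand the cancellation of all the $n_3$-dependent chain-rule residues, which the paper's linear-combination trick sidesteps. If you adopt the paper's observation that $T_4$ is a consequence of the other three, the only genuinely new computation left in your approach is the assembly of $T_1$, and even that is shorter via the factorized ladder operators than via direct expansion of \eqref{C11}.
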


\begin{proof}
The first equation is obtained from
\[
-\mathcal{N}_{10}\mathcal{N}_{10}^{+}-\frac{1}{1-x}\mathcal{N}_{01}\mathcal{N}_{01}^{+}-\frac{1}{1-x-y}\mathcal{O}_{10}\mathcal{O}_{10}^{+}
\]
and, second and third equations follow from $\mathcal{N}_{01}\mathcal{N}_{01}^{+}$ and $\mathcal{O}_{10}\mathcal{O}_{10}^{+}$, respectively. From
\[
T_{1}u-\dfrac{1}{1-x}T_{2}u-\dfrac{1}{1-x-y}T_{3}u=0,
\]
we obtain the last equation.
\end{proof}

\begin{remark}
For $a=b=0$, the first equation reduces to the partial differential equation \cite{DX} for the polynomials $P_{n_{1},n_{2},n_{3}}^{\left(  \alpha,\beta,\gamma,\delta \right)  }(x,y,z)  $ defined in \eqref{C1}
\begin{multline*}
 x(1-x)  u_{xx}+y(1-y)  u_{yy}+z(1-z)u_{zz}-2xzu_{xz}-2yzu_{yz}-2xyu_{xy}\\
 +\left(  \alpha+1-\left(  \alpha+\beta+\gamma+\delta+4\right)  x\right)u_{x}+\left(  \beta+1-\left(  \alpha+\beta+\gamma+\delta+4\right)  y\right)u_{y}\\
 +\left(  \gamma+1-\left(  \alpha+\beta+\gamma+\delta+4\right)  z\right)u_{z}+n\left(  n+\alpha+\beta+\gamma+\delta+3\right)  u=0.
\end{multline*}
\end{remark}

\begin{remark}
For the second order differential equations in the first set given above, the first, second and the last two equations give the second order equation in the form of \eqref{A2}. The third, fourth, seventh and eighth equations give the equation \eqref{A2} multiplied by $y$. The fifth, sixth, ninth and tenth equations give the equation \eqref{A2} multiplied by $1-x-y$. For the second order differential equations in the second set, the first, second and the last two equations give the second order equation in the form of \eqref{A4}. The third, fourth, seventh and eighth equations give the equation \eqref{A4} multiplied by $x.$ The fifth, sixth, ninth and tenth equations give the equation \eqref{A4} multiplied by $1-x$. Moreover, for the second order differential equations given in the third set, the first, second and the last two equations give the second order equation in the form of \eqref{A3}. The third, fourth, seventh and eighth equations give the equation \eqref{A3} multiplied by $z$. The fifth, sixth, ninth and tenth equations give the equation \eqref{A3} multiplied by $1-x-y-z$.
\end{remark}

\begin{remark}
The partial differential equation \eqref{A4} has also a monic solution given by
\begin{multline}
\tilde{P}_{n_{1},n_{2},n_{3}}^{(\alpha,\beta,\gamma,\delta,a,b)}(x,y,z)=\frac{\left(  -1\right)  ^{n_{1}} \Gamma(\alpha+n_{1}+1)\Gamma(\alpha+\beta+\gamma+\delta+a+b+n+n_{2}+n_{3}+3)}
{\Gamma(\alpha+1)\Gamma(\alpha+\beta+\gamma+\delta+a+b+2n+3)}\\
\times y^{n_{2}}z^{n_{3}} \, \hyper{2}{1}{-n_{1},\alpha+\beta+\gamma+\delta+a+b+n+n_{2}+n_{3}+3}{\alpha+1}{x},
\end{multline}
which can be written as
\begin{multline}
\tilde{P}_{n_{1},n_{2},n_{3}}^{(\alpha,\beta,\gamma,\delta,a,b)}(x,y,z)=\frac{n_{1}! \Gamma(\alpha+\beta+\gamma+\delta+a+b+n+n_{2}+n_{3}+3)}
{\Gamma(\alpha+\beta+\gamma+\delta+a+b+2n+3)} \\
\times y^{n_{2}}z^{n_{3}}~\widetilde{P}_{n_{1}}^{\left(  \beta+\gamma+\delta+a+b+2n_{2}+2n_{3}+2,\alpha \right)  }(x)
\end{multline}
in terms of shifted Jacobi polynomials.
\end{remark}

\begin{theorem}
The following connection relation holds true
\begin{multline*}
P_{n_{1},n_{2},n_{3}}^{(\alpha,\beta,\gamma,\delta,a,b)  }(x,y,z)     =\sum \limits_{m=0}^{n_{1}}\frac{\left(  -1\right)  ^{m}\left(  2n-2m+e-\alpha+\xi+3\right)
\Gamma \left(  n+n_{2}+n_{3}+e-\alpha+3\right)  \left(  \alpha-\xi \right)_{m}}{  \Gamma \left(n+n_{2}+n_{3}+e+3\right)  \Gamma \left(  2n-m+e-\alpha+\xi+4\right)  }\\
  \times \frac{\Gamma \left(  2n-m+e+3\right)  \Gamma \left(  n+n_{2}+n_{3}-m+e-\alpha+\xi+3\right)}{m!\Gamma \left(  n+n_{2}+n_{3}-m+e-\alpha+3\right)}  P_{n_{1}-m,n_{2},n_{3}}^{\left(  \xi,\beta,\gamma,\delta,a,b\right)  }(x,y,z)
\end{multline*}
where $P_{n_{1},n_{2},n_{3}}^{(\alpha,\beta,\gamma,\delta,a,b)  }(x,y,z) $ are defined in \eqref{C}, $e=\alpha+\beta+\gamma+\delta+a+b$, $n=n_{1}+n_{2}+n_{3}$, and $\left(  \lambda \right)  _{n}$ denotes the Pochammer symbol defined by $\left(  \lambda \right)  _{n}=\lambda \left(  \lambda+1\right)  \cdots \left(\lambda+n-1\right)$; $n\in \mathbb{N}_{0}$, $\left(  \lambda \right)  _{0}=1$.
\end{theorem}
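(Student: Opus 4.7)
The strategy is to reduce this three-variable connection formula to a univariate one for shifted Jacobi polynomials. In the definition \eqref{C}, the only factor that involves $\alpha$ or $n_1$ is $\widetilde{P}_{n_1}^{(A,\alpha)}(x)$ with
$$A := \beta+\gamma+\delta+a+b+2n_2+2n_3+2,$$
while all remaining factors depend on $(x,y,z)$ and on $n_2,n_3,\beta,\gamma,\delta,a,b$ but not on $\alpha$ or $n_1$. Consequently, dividing the stated identity by this common factor reduces the problem to the univariate connection formula
$$\widetilde{P}_{n_1}^{(A,\alpha)}(x) = \sum_{m=0}^{n_1} c_m\, \widetilde{P}_{n_1-m}^{(A,\xi)}(x),$$
in which $c_m$ should match the coefficients in the statement, after using $n+n_2+n_3+e-\alpha+3=n_1+A+1$, $n+n_2+n_3+e+3=n_1+\alpha+A+1$, $2n-2m+e-\alpha+\xi+3=2(n_1-m)+\xi+A+1$, and similar identifications to re-express everything in $A,\alpha,\xi,n_1,m$.

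I would then extract $c_m$ by orthogonality in $x$ against $\widetilde{P}_{n_1-m}^{(A,\xi)}(x)$ with weight $(1-x)^A x^\xi$ on $(0,1)$, giving
$$c_m\, h_{n_1-m}^{(A,\xi)} = \int_0^1 \widetilde{P}_{n_1}^{(A,\alpha)}(x)\, \widetilde{P}_{n_1-m}^{(A,\xi)}(x)\, (1-x)^A x^\xi\, dx.$$
To evaluate this integral I would apply the Rodrigues formula \eqref{eq:sjp} to $\widetilde{P}_{n_1}^{(A,\alpha)}(x)$ and integrate by parts $n_1$ times; the boundary terms vanish thanks to the factors $(1-x)^{n_1+A}$ and $x^{n_1+\alpha}$. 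What remains is the integral of
$$\tfrac{1}{n_1!}(1-x)^{n_1+A}\, x^{n_1+\alpha}\, \frac{d^{n_1}}{dx^{n_1}}\!\left[x^{\xi-\alpha}\, \widetilde{P}_{n_1-m}^{(A,\xi)}(x)\right].$$
Expanding the inner derivative by the Leibniz rule and using the standard differentiation rule for shifted Jacobi polynomials produces a finite sum over $j=0,\ldots,n_1-m$. The factor $\frac{d^{n_1-j}}{dx^{n_1-j}} x^{\xi-\alpha}$ brings in the Pochhammer symbol $(\alpha-\xi)_{n_1-j}$ with the appropriate sign, and each surviving term reduces to a Beta integral evaluable via \eqref{1}, yielding a terminating hypergeometric sum.

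The main obstacle will be collapsing this terminating $_3F_2$-type sum to the compact closed form in the statement: I expect this to follow from a Saalsch\"utz or Whipple-type summation applied to a well-poised series, which produces exactly the product of Gamma functions together with the telescoping numerator factor $2(n_1-m)+\xi+A+1$ appearing in the theorem. An alternative route is to invoke the classical Jacobi connection formula for a change of the second parameter and match term-by-term with the stated expression. As a sanity check, the $m=0$ term reduces to the ratio
$$\frac{\Gamma(2n_1+\alpha+A+1)\,\Gamma(n_1+\xi+A+1)}{\Gamma(n_1+\alpha+A+1)\,\Gamma(2n_1+\xi+A+1)}$$
of the leading coefficients of $\widetilde{P}_{n_1}^{(A,\alpha)}(x)$ and $\widetilde{P}_{n_1}^{(A,\xi)}(x)$, which agrees with the $m=0$ specialization of the stated formula and corroborates the normalization.
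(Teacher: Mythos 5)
Your proposal is correct and follows essentially the same route as the paper: both reduce the identity to the univariate connection formula for the factor $\widetilde{P}_{n_1}^{(A,\alpha)}(x)$ with $A=\beta+\gamma+\delta+a+b+2n_2+2n_3+2$, which is the only piece of \eqref{C} depending on $\alpha$ and $n_1$, and both rely on the fact that only the second Jacobi parameter changes so that the coefficients admit a closed form. The only difference is that the paper simply quotes the classical univariate connection relation from Askey and Koepf--Schmersau, whereas you sketch its standard proof (orthogonality, Rodrigues formula, integration by parts, Pfaff--Saalsch\"utz summation) --- a valid, if unnecessary, elaboration, and your $m=0$ leading-coefficient check is consistent with the stated formula.
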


\begin{proof}
It follows from the connection relation between univariate Jacobi polynomials \cite{Askey,KS}
\begin{multline*}
P_{n}^{\left(  \alpha,\beta \right)  }(x)    ={\displaystyle \sum \limits_{m=0}^{n}}\frac{\left(  -1\right)  ^{n-m}\left(  2m+\alpha+\delta+1\right)
\Gamma \left(  n+\alpha+1\right)  \Gamma \left(  n+m+\alpha+\beta+1\right)}{\Gamma \left(  m+\alpha+1\right)  \Gamma \left(  n+\alpha+\beta+1\right)  }\\
  \times \frac{\Gamma \left(  m+\alpha+\delta+1\right)  \left(  \beta-\delta \right)  _{n-m}}{\Gamma \left(  n+m+\alpha+\delta+2\right)  \left(n-m\right)  !}P_{m}^{\left(  \alpha,\delta \right)  }(x).
\end{multline*}
\end{proof}

\begin{theorem}
The following connection relation holds true
\begin{multline*}
P_{n_{1},n_{2},n_{3}}^{(\alpha,\beta,\gamma,\delta,a,b)  }(x,y,z)   \\=
{\displaystyle \sum \limits_{k_{1}=0}^{n_{1}}}
{\displaystyle \sum \limits_{k_{2}=0}^{n_{2}}}
{\displaystyle \sum \limits_{k_{3}=0}^{n_{3}}}
\frac{\left(  k_{1}+2n_{2}+2n_{3}+\beta+\gamma+\delta+a+b+3\right)
_{n_{1}-k_{1}}\left(  n_{1}+2n_{2}+2n_{3}+e+3\right)  _{k_{1}}}{\left(  n_{1}-k_{1}\right)  !\left(  n_{2}-k_{2}\right)  !\left(  n_{3}-k_{3}\right)  !\left(  k_{1}+2k_{2}+2k_{3}
+\phi+\theta+\xi+\eta+a+b+3\right)  _{k_{1}}}\\
 \times \frac{\left(  k_{2}+2n_{3}+\gamma+\delta+b+2\right)  _{n_{2}-k_{2}}\left(  n_{2}+2n_{3}+\beta+\gamma+\delta+b+2\right)  _{k_{2}}\left(k_{3}+\delta+1\right)  _{n_{3}-k_{3}}\left(  n_{3}+\gamma+\delta+1\right)_{k_{3}}}{\left(  k_{2}+2k_{3}+\theta+\xi+\eta+b+2\right)  _{k_{2}}\left(k_{3}+\xi+\eta+1\right)  _{k_{3}}}\\
\times \hyper{3}{2}{k_{1}-n_{1} ,n_{1}+2n_{2}+2n_{3}+k_{1}+e+3,k_{1}+2k_{2}+2k_{3}+\theta+\xi+\eta+a+b+3}{2k_{1}+2k_{2}+2k_{3}+\phi+\theta+\xi+\eta+a+b+4,k_{1}+2n_{2}+2n_{3}+\beta+\gamma+\delta+a+b+3}{1}\\
\times \hyper{3}{2}{k_{2}-n_{2} ,n_{2}+2n_{3}+k_{2}+\beta+\gamma+\delta+b+2,k_{2}+2k_{3}+\xi+\eta+b+2}{2k_{2}+2k_{3}+\theta+\xi+\eta+b+3,k_{2}+2n_{3}+\gamma+\delta+b+2}{1}\\
\times \hyper{3}{2}{k_{3}-n_{3} ,n_{3}+k_{3}+\gamma+\delta+1,k_{3}+\xi+1}{2k_{3}+\xi+\eta+2,k_{3}+\delta+1}{1} \\
\times
 (1-x)  ^{n_{2}-k_{2}}\left(  1-x-y\right)  ^{n_{3}-k_{3}} P_{k_{1},k_{2},k_{3}}^{\left(  \phi,\theta,\eta,\xi,a,b\right)}(x,y,z),
\end{multline*}
where $P_{n_{1},n_{2},n_{3}}^{(\alpha,\beta,\gamma,\delta,a,b)  }(x,y,z) $ are defined in \eqref{C}.
\end{theorem}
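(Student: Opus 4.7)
My plan is to exploit the product structure of $P_{n_{1},n_{2},n_{3}}^{(\alpha,\beta,\gamma,\delta,a,b)}(x,y,z)$ and reduce the claim to three independent applications of a univariate Jacobi connection formula. Explicitly, I would rewrite~\eqref{C} as
\[
P_{n_{1},n_{2},n_{3}}^{(\alpha,\beta,\gamma,\delta,a,b)}(x,y,z)=\widetilde{P}_{n_{1}}^{(A_{1},\alpha)}(x)\,(1-x)^{n_{2}}\widetilde{P}_{n_{2}}^{(A_{2},\beta)}\!\left(\tfrac{y}{1-x}\right)(1-x-y)^{n_{3}}\widetilde{P}_{n_{3}}^{(\delta,\gamma)}\!\left(\tfrac{z}{1-x-y}\right),
\]
with $A_{1}=\beta+\gamma+\delta+a+b+2n_{2}+2n_{3}+2$ and $A_{2}=\gamma+\delta+2n_{3}+b+1$. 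The target $P_{k_{1},k_{2},k_{3}}^{(\phi,\theta,\eta,\xi,a,b)}$ admits the analogous factorization with constants $B_{1}=\theta+\eta+\xi+a+b+2k_{2}+2k_{3}+2$ and $B_{2}=\eta+\xi+2k_{3}+b+1$, and with $n_{i}$ replaced by $k_{i}$.

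Next I would invoke the classical two-parameter connection formula for the shifted Jacobi polynomials, $\widetilde{P}_{n}^{(a,b)}(x)=\sum_{k=0}^{n} C_{n,k}(a,b;\tilde{a},\tilde{b})\,\widetilde{P}_{k}^{(\tilde{a},\tilde{b})}(x)$, in which $C_{n,k}$ is a product of Pochhammer ratios times a terminating $_{3}F_{2}(1)$; this is well documented in~\cite{Askey,KS} and is the two-parameter generalization of the formula used in the preceding theorem. Applying this expansion separately to each of the three univariate factors above produces three independent sums indexed by $k_{1},k_{2},k_{3}$, whose $_{3}F_{2}$ coefficients have upper parameters $k_{i}-n_{i}$, $n_{i}+k_{i}+(\text{first}+\text{second param}+1)$, and $k_{i}+(\text{new first param}+1)$, matching the three hypergeometric factors in the statement.

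I would then multiply the three resulting expansions together, using
\[
(1-x)^{n_{2}}(1-x-y)^{n_{3}}=(1-x)^{k_{2}}(1-x-y)^{k_{3}}\cdot(1-x)^{n_{2}-k_{2}}(1-x-y)^{n_{3}-k_{3}}.
\]
The pieces $\widetilde{P}_{k_{1}}^{(B_{1},\phi)}(x)\cdot(1-x)^{k_{2}}\widetilde{P}_{k_{2}}^{(B_{2},\theta)}(\tfrac{y}{1-x})\cdot(1-x-y)^{k_{3}}\widetilde{P}_{k_{3}}^{(\xi,\eta)}(\tfrac{z}{1-x-y})$ then reassemble into $P_{k_{1},k_{2},k_{3}}^{(\phi,\theta,\eta,\xi,a,b)}(x,y,z)$, and the leftover $(1-x)^{n_{2}-k_{2}}(1-x-y)^{n_{3}-k_{3}}$ is precisely the factor appearing in the theorem.

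The main obstacle is purely bookkeeping, not conceptual. Since $A_{1},A_{2},B_{1},B_{2}$ depend on the outer summation indices $n_{2},n_{3},k_{2},k_{3}$, the Pochhammer ratios produced by the three $C_{n_{i},k_{i}}$ must be reorganized carefully to match the compact form in the statement; for instance, one must identify $(k_{1}+A_{1}+1)_{n_{1}-k_{1}}$ with $(k_{1}+2n_{2}+2n_{3}+\beta+\gamma+\delta+a+b+3)_{n_{1}-k_{1}}$, and similarly for $(n_{1}+A_{1}+\alpha+1)_{k_{1}}$, $(k_{1}+B_{1}+\phi+1)_{k_{1}}$ and their analogues at indices $2,3$. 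Once each $_{3}F_{2}$ coefficient and its Pochhammer prefactor is identified with the corresponding classical expression, the triple sum in the theorem follows by direct multiplication of the three factored expansions.
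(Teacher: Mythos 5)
Your proposal is correct and follows essentially the same route as the paper: the paper's proof simply invokes the univariate Jacobi connection formula (with both parameters changed, carrying the ${}_3F_2(1)$ coefficient from Lewanowicz/Ismail) and applies it factorwise to the product structure of $P_{n_{1},n_{2},n_{3}}^{(\alpha,\beta,\gamma,\delta,a,b)}$, exactly as you describe. Your additional remarks on the nested dependence of the target parameters $B_{1},B_{2}$ on $k_{2},k_{3}$ and on splitting $(1-x)^{n_{2}}(1-x-y)^{n_{3}}$ correctly supply the bookkeeping the paper leaves implicit.
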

\begin{proof}
The result is a consequence of the connection relation between univariate Jacobi polynomials (see \cite{L} and \cite{MEH})
\begin{multline*}
P_{n}^{\left(  \alpha,\beta \right)  }(x)  ={\displaystyle \sum \limits_{k=0}^{n}}\frac{\left(  k+\alpha+1\right)  _{n-k}\left(  n+\alpha+\beta+1\right)  _{k}
}{\left(  n-k\right)  !\left(  k+\gamma+\delta+1\right)  _{k}} \\ \times \hyper{3}{2}{k-n ,n+k+\alpha+\beta+1,k+\gamma+1}{2k+\gamma+\delta+2,k+\alpha+1}{1}P_{k}^{\left(
\gamma,\delta \right)  }(x).
\end{multline*}
\end{proof}

\begin{theorem}
For $n\geq0,$ the three term recurrence relation holds
\begin{multline*}
xP_{n_{1},n_{2},n_{3}}^{(\alpha,\beta,\gamma,\delta,a,b)}(x,y,z)  =\frac{(n_{1}+1)(e+n+n_{2}+n_{3}+3)}{\left(  e+2n+3\right)  \left(
e+2n+4\right)  }P_{n_{1}+1,n_{2},n_{3}}^{(\alpha,\beta,\gamma,\delta,a,b)  }(x,y,z) \\
+\frac{(\alpha+2n_{1}+1)(e+2n+2)-2n_{1}(\alpha+n_{1})}{\left(  e+2n+2\right)  \left(e+2n+4\right)  }
P_{n_{1},n_{2},n_{3}}^{(\alpha,\beta,\gamma,\delta,a,b)  }\left(x,y,z\right) \\
+\frac{\left(  n+n_{2}+n_{3}+e-\alpha+2\right)  \left(  \alpha+n_{1}\right)  }{\left(  e+2n+2\right)  \left(  e+2n+3\right)  }P_{n_{1}-1,n_{2},n_{3}}^{(\alpha,\beta,\gamma,\delta,a,b)}(x,y,z),
\end{multline*}
where $n=n_{1}+n_{2}+n_{3}$ and $e=\alpha+\beta+\gamma+\delta+a+b$.
\end{theorem}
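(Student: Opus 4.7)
The plan is to reduce this three-variable recurrence to the classical three-term recurrence for a single shifted univariate Jacobi polynomial. The key observation is that in the factored form \eqref{C}, the only dependence on $n_1$ sits in the very first factor $\widetilde{P}_{n_1}^{(\beta+\gamma+\delta+a+b+2n_2+2n_3+2,\alpha)}(x)$; every other factor depends on $n_2$, $n_3$ and the parameters but not on $n_1$.

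First I would write
\[
P_{n_1,n_2,n_3}^{(\alpha,\beta,\gamma,\delta,a,b)}(x,y,z) = \widetilde{P}_{n_1}^{(a',\alpha)}(x)\, Q(x,y,z),
\]
where $a' := \beta+\gamma+\delta+a+b+2n_2+2n_3+2$ and
\[
Q(x,y,z) := (1-x)^{n_2} \widetilde{P}_{n_2}^{(\gamma+\delta+2n_3+b+1,\beta)}\!\left(\tfrac{y}{1-x}\right)(1-x-y)^{n_3}\widetilde{P}_{n_3}^{(\delta,\gamma)}\!\left(\tfrac{z}{1-x-y}\right).
\]
Both $a'$ and $Q(x,y,z)$ are independent of $n_1$, which is the whole point.

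Next I would invoke the classical three-term recurrence for the shifted Jacobi polynomial $\widetilde{P}_{n_1}^{(a',\alpha)}(x)$ in the variable $x$,
\[
x\,\widetilde{P}_{n_1}^{(a',\alpha)}(x) = A_{n_1}\widetilde{P}_{n_1+1}^{(a',\alpha)}(x) + B_{n_1}\widetilde{P}_{n_1}^{(a',\alpha)}(x) + C_{n_1}\widetilde{P}_{n_1-1}^{(a',\alpha)}(x),
\]
with the standard coefficients obtained from the substitution $x\mapsto 2x-1$ in the classical Jacobi recurrence (see \cite{R,S}). Multiplying this identity throughout by the $n_1$-independent factor $Q(x,y,z)$ and recognising that
\[
\widetilde{P}_{n_1\pm1}^{(a',\alpha)}(x)\, Q(x,y,z) = P_{n_1\pm1,n_2,n_3}^{(\alpha,\beta,\gamma,\delta,a,b)}(x,y,z)
\]
turns this at once into a three-term recurrence of exactly the required shape; no multivariate machinery is needed.

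All that remains is to rewrite the three coefficients in the notation of the theorem using
\[
a' + \alpha = e + 2(n_2+n_3) + 2, \qquad n_1 + a' + \alpha + 1 = n + n_2 + n_3 + e + 3,
\]
\[
n_1 + a' = n + n_2 + n_3 + e - \alpha + 2, \qquad 2n_1 + a' + \alpha = 2n + e + 2,
\]
with $n = n_1+n_2+n_3$ and $e = \alpha+\beta+\gamma+\delta+a+b$. The outer coefficients $A_{n_1}$ and $C_{n_1}$ match the stated ones by direct substitution. The main obstacle is the middle coefficient: one must verify that the classical expression $\tfrac12 + \frac{\alpha^2 - (a')^2}{2(2n_1+a'+\alpha)(2n_1+a'+\alpha+2)}$ collapses to $\frac{(\alpha+2n_1+1)(e+2n+2)-2n_1(\alpha+n_1)}{(e+2n+2)(e+2n+4)}$, which is a short but careful algebraic check using $\alpha + a' = e + 2(n_2+n_3)+2$ and $\alpha - a' = 2\alpha - e - 2(n_2+n_3) - 2$.
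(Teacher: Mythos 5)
Your proposal is correct and is exactly the paper's argument: the paper's proof consists of the single remark that it suffices to apply the classical three-term recurrence of Rainville for the univariate Jacobi polynomials to the first factor $\widetilde{P}_{n_1}^{(a',\alpha)}(x)$, since the remaining factors of \eqref{C} do not depend on $n_1$. Your parameter identifications and the algebraic reduction of the middle coefficient check out, so you have simply supplied the details the paper leaves implicit.
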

\begin{proof}
It is enough to use the three term recurrence relation \cite[p.263, Eq. (1)]{R} for the univariate Jacobi polynomials.
\end{proof}

\section{Sparse recurrence relations for $P_{n_{1},n_{2},n_{3}}^{\left(\alpha,\beta,\gamma,\delta \right)}(x,y,z)  $}\label{sec:4}

By combining the results given in the previous section, it is possible to derive sparse recurrence relations between Koornwinder polynomials in three variables with different parameters and their partial derivatives. We denote $w=1-x-y-z$ in the following results.

\begin{corollary}
The partial derivatives of $P_{n_{1},n_{2},n_{3}}^{\left(  \alpha,\beta,\gamma,\delta \right)  }(x,y,z)  $ can be expressed in terms of Koornwinder polynomials in three variables with incremented parameters as follows
\begin{multline} \label{21}
 \left(  2n_{2}+2n_{3}+\beta+\gamma+\delta+2\right)  \left(  \frac{\partial}{\partial x}-\frac{\partial}{\partial y}\right)  P_{n_{1},n_{2},n_{3}}^{\left(  \alpha,\beta,\gamma,\delta \right)  }(x,y,z) \\
  =\left(  n_{2}+2n_{3}+\beta+\gamma+\delta+2\right)  \left(  n+n_{2}+n_{3}+\alpha+\beta+\gamma+\delta+3\right)  P_{n_{1}-1,n_{2},n_{3}}^{\left(\alpha+1,\beta+1,\gamma,\delta \right)  }(x,y,z)  \\
-\left(  n_{1}+2n_{2}+2n_{3}+\beta+\gamma+\delta+2\right)  \left(n_{2}+2n_{3}+\gamma+\delta+1\right)  P_{n_{1},n_{2}-1,n_{3}}^{\left(\alpha+1,\beta+1,\gamma,\delta \right)  }(x,y,z)  ,
\end{multline}
\begin{multline}\label{3}
\left(  2n_{3}+\gamma+\delta+1\right)  \left(  \frac{\partial}{\partial z}-\frac{\partial}{\partial y}\right)  P_{n_{1},n_{2},n_{3}}^{\left(\alpha,\beta,\gamma,\delta \right)  }(x,y,z)  \\
=\left(  n_{3}+\delta \right)  \left(  n_{2}+2n_{3}+\gamma+\delta+1\right)P_{n_{1},n_{2},n_{3}-1}^{\left(  \alpha,\beta+1,\gamma+1,\delta \right)}(x,y,z)  \\
-\left(  n_{2}+2n_{3}+\beta+\gamma+\delta+2\right)  \left(  n_{3}+\gamma+\delta+1\right)  P_{n_{1},n_{2}-1,n_{3}}^{\left(  \alpha,\beta+1,\gamma+1,\delta \right)  }(x,y,z)  ,
\end{multline}
\begin{equation}\label{4}
\frac{\partial}{\partial z}P_{n_{1},n_{2},n_{3}}^{\left(  \alpha,\beta ,\gamma,\delta \right)  }(x,y,z)  =\left(  n_{3}+\gamma +\delta+1\right)  P_{n_{1},n_{2},n_{3}-1}^{\left(  \alpha,\beta,\gamma +1,\delta+1\right)  }(x,y,z)  ,
\end{equation}
and
\begin{multline}\label{2}
\left(  2n_{2}+2n_{3}+\beta+\gamma+\delta+2\right) \frac{\partial}{\partial z} \left(  \frac{\partial}{\partial x}-\frac{\partial}{\partial y}\right)
P_{n_{1},n_{2},n_{3}}^{\left(  \alpha,\beta,\gamma,\delta \right)  }(x,y,z) \\
  =\left(  n_{2}+2n_{3}+\beta+\gamma+\delta+2\right)  \left(  n+n_{2}+n_{3}+\alpha+\beta+\gamma+\delta+3\right) \\
  \times \left(  n_{3}+\gamma+\delta+1\right)  P_{n_{1}-1,n_{2},n_{3}-1}^{\left(\alpha+1,\beta+1,\gamma+1,\delta+1 \right)  }(x,y,z)  \\
 -\left(  n_{1}+2n_{2}+2n_{3}+\beta+\gamma+\delta+2\right)  \left(n_{2}+2n_{3}+\gamma+\delta+1\right) \\
 \times \left(  n_{3}+\gamma+\delta+1\right)  P_{n_{1},n_{2}-1,n_{3}-1}^{\left(\alpha+1,\beta+1,\gamma+1,\delta+1 \right)  }(x,y,z) .
\end{multline}
\end{corollary}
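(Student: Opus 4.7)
The plan is to establish the four identities in the order \eqref{4}, \eqref{21}, \eqref{3}, \eqref{2}, exploiting the factorization (valid when $a=b=0$)
\[P_{n_{1},n_{2},n_{3}}^{(\alpha,\beta,\gamma,\delta)}(x,y,z) = \widetilde{P}_{n_{1}}^{(A,\alpha)}(x)\,(1-x)^{n_{2}+n_{3}}\, P_{n_{2}+n_{3},n_{3}}^{(\beta,\gamma,\delta,0)}(u,v),\]
where $A := \beta+\gamma+\delta+2n_{2}+2n_{3}+2$, $u = y/(1-x)$, $v = z/(1-x)$, and the inner factor is the bivariate polynomial \eqref{B} with $d=0$. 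A short check shows that each target polynomial on the right-hand side of the corollary factors with the \emph{same} univariate $x$-factor---either $\widetilde{P}_{n_{1}}^{(A,\alpha)}(x)$, $\widetilde{P}_{n_{1}-1}^{(A+1,\alpha+1)}(x)$, or $\widetilde{P}_{n_{1}}^{(A-1,\alpha+1)}(x)$---because the parameter shift in $(\beta,\gamma)$ either compensates or controllably adjusts the index appearing in $A$. Equation \eqref{4} is immediate from Theorem~\ref{Theorem 1} with $a=b=0$, since $\mathcal{O}_{1,0}u = \partial_z u$.

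For \eqref{21} the chain rule gives $(\partial_x - \partial_y)[F(x)G(x,y,z)] = F'(x)G + F(x)(\partial_x G - \partial_y G)$ with $G = (1-x)^{n_{2}+n_{3}}Q(u,v)$ and $Q = P^{(\beta,\gamma,\delta,0)}_{n_2+n_3,n_3}$. A direct manipulation identifies $\partial_x G - \partial_y G = -(1-x)^{n_2+n_3-1}\mathcal{M}_{5,0}^{+}Q$, and the Section~\ref{sec:2} identity evaluates $\mathcal{M}_{5,0}^{+}Q = (n_2+2n_3+\gamma+\delta+1)P^{(\beta+1,\gamma,\delta,0)}_{n_2+n_3-1,n_3}(u,v)$. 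Together with $F'(x) = (n+n_2+n_3+e+3)\widetilde{P}_{n_1-1}^{(A+1,\alpha+1)}(x)$, one obtains an expression for $A(\partial_x-\partial_y)P$ that still carries $A\cdot Q$ (i.e.\ the original $\beta$ family). Now apply the univariate Jacobi contiguous relation
\[(2N+a+b+1)\widetilde{P}_N^{(a,b)}(u) = (N+a+b+1)\widetilde{P}_N^{(a,b+1)}(u) + (N+a)\widetilde{P}_{N-1}^{(a,b+1)}(u)\]
with $N=n_2$, $a=2n_3+\gamma+\delta+1$, $b=\beta$, to split $A\cdot Q$ as $(A-n_2)P_{n_2+n_3,n_3}^{(\beta+1,\gamma,\delta,0)} + (n_2+2n_3+\gamma+\delta+1)P_{n_2+n_3-1,n_3}^{(\beta+1,\gamma,\delta,0)}$. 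Collecting the coefficient of $P_{n_2+n_3-1,n_3}^{(\beta+1,\gamma,\delta,0)}$ and eliminating the derivative via the ladder identity $\widetilde{\mathcal{L}}_{6,A}^{+}\widetilde{P}_{n_1}^{(A,\alpha)}(x) = (n_1+A)\widetilde{P}_{n_1}^{(A-1,\alpha+1)}(x)$---equivalently, $(n+n_2+n_3+e+3)(1-x)\widetilde{P}_{n_1-1}^{(A+1,\alpha+1)}(x) - A\widetilde{P}_{n_1}^{(A,\alpha)}(x) = -(n_1+A)\widetilde{P}_{n_1}^{(A-1,\alpha+1)}(x)$---produces exactly the RHS of \eqref{21}.

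Equation \eqref{3} follows the same pattern: $(\partial_z - \partial_y)P = F(x)(1-x)^{n_2+n_3-1}(Q_v - Q_u)$, and since both RHS polynomials share the common $x$-factor $\widetilde{P}_{n_1}^{(A,\alpha)}(x)$ (the double shift $(\beta,\gamma)\to(\beta+1,\gamma+1)$ exactly compensates the single index shift), the identity collapses to a purely bivariate statement for $Q_v - Q_u$. Writing $Q = \widetilde{P}_{n_2}^{(c,\beta)}(u)(1-u)^{n_3}\widetilde{P}_{n_3}^{(\delta,\gamma)}(w)$ with $c = 2n_3+\gamma+\delta+1$ and $w = v/(1-u)$, expand both derivatives and invoke the ladder $\widetilde{\mathcal{L}}_{4,n_3}^{+}$ on $\widetilde{P}_{n_3}^{(\delta,\gamma)}(w)$ (which produces $\widetilde{P}_{n_3-1}^{(\delta+1,\gamma)}$) together with the same univariate contiguous relation above, now re-indexed to move the $(\delta,\gamma)$-family factors into the $(\delta,\gamma+1)$ family appearing in $P^{(\beta+1,\gamma+1,\delta,0)}$. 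Equation \eqref{2} is immediate: apply $\partial_z$ to \eqref{21}, commute it past $\partial_x-\partial_y$, and invoke \eqref{4} on each of the two RHS polynomials. Because the coefficient $(n_3+\gamma+\delta+1)$ produced by \eqref{4} depends only on $n_3,\gamma,\delta$, it factors uniformly out of both terms.

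The main obstacle is the bookkeeping in \eqref{21} and \eqref{3}: the derivative computation naturally delivers polynomials in the \emph{original} parameter family (with $\beta$, or $\gamma$, unshifted), whereas the corollary's right-hand side lives in the shifted family. The reconciliation hinges on a careful three-way combination of a bivariate ladder ($\mathcal{M}_{5,0}^{+}$ in \eqref{21}, an analogous compound acting on the $w$-factor in \eqref{3}), the univariate Jacobi contiguous relation on the middle Jacobi factor, and $\widetilde{\mathcal{L}}_{6,A}^{+}$ (or $\widetilde{\mathcal{L}}_{2,A,n_1}^{+}$) on the leading $x$-factor. Once the cancellations are tracked, the RHS coefficients $(n_{2}+2n_{3}+\beta+\gamma+\delta+2)(n+n_{2}+n_{3}+e+3)$ and $(n_{1}+2n_{2}+2n_{3}+\beta+\gamma+\delta+2)(n_{2}+2n_{3}+\gamma+\delta+1)$ emerge as the combinatorial signature of this confluence.
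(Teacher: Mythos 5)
Your proposal is correct in substance but takes a genuinely different route from the paper. The paper's proof is a pure operator-calculus argument: it observes that the second-order combinations $\mathcal{N}_{1,0}\mathcal{N}_{0,3}-\mathcal{N}_{0,5}^{+}\mathcal{N}_{6,0}^{+}$ and $\mathcal{O}_{5,0}^{+}\mathcal{N}_{0,6}^{+}-\mathcal{N}_{0,1}\mathcal{O}_{3,0}$ collapse, when $a=b=0$, to the first-order operators $(2n_{2}+2n_{3}+\beta+\gamma+\delta+2)(\partial_x-\partial_y)$ and $(2n_{3}+\gamma+\delta+1)(\partial_z-\partial_y)$; each product of ladders passes through the auxiliary six-parameter family (with $a$ or $b$ temporarily shifted to $\pm1$) and lands back at $a=b=0$ with exactly the shifted parameters of the corollary, so Theorem~\ref{Theorem 1} applied twice yields the right-hand sides with no further computation --- this is precisely the payoff of having introduced the extra parameters $a,b$. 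You instead stay entirely inside the four-parameter family, differentiate the explicit product formula by the chain rule, and reconcile the resulting ``wrong-family'' polynomials using the univariate contiguous relation $(2N+a+b+1)\widetilde{P}_N^{(a,b)}=(N+a+b+1)\widetilde{P}_N^{(a,b+1)}+(N+a)\widetilde{P}_{N-1}^{(a,b+1)}$ (which is $\widetilde{\mathcal{L}}_{3,a,b,N}+\widetilde{\mathcal{L}}_{5,N}^{+}$) together with $\widetilde{\mathcal{L}}_{6,A}^{+}$ on the leading $x$-factor; I checked your computation for \eqref{21} and it closes with the stated coefficients, and \eqref{4} and \eqref{2} are handled exactly as in the paper. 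The cost of your approach is the bookkeeping you acknowledge; the benefit is that it avoids the auxiliary family altogether and makes visible which univariate contiguous relations drive the identity. One small correction in your sketch of \eqref{3}: the ladder you need on $\widetilde{P}_{n_3}^{(\delta,\gamma)}(w)$ is $\widetilde{\mathcal{L}}_{5,n_3}^{+}$, which gives $(n_3+\delta)\widetilde{P}_{n_3-1}^{(\delta,\gamma+1)}$ and matches both the factor $(n_3+\delta)$ and the last Jacobi factor of $P^{(\alpha,\beta+1,\gamma+1,\delta)}_{n_1,n_2,n_3-1}$; it is not $\widetilde{\mathcal{L}}_{4,n_3}^{+}$, which would produce $\widetilde{P}_{n_3-1}^{(\delta+1,\gamma)}$ and does not occur on the right-hand side.
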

\begin{proof}
The first relation comes from the equality
\[
\left(  \mathcal{N}_{1,0}\mathcal{N}_{0,3}-\mathcal{N}_{0,5}^{+}\mathcal{N}_{6,0}^{+}\right)  u=\left(  2n_{2}+2n_{3}+\beta+\gamma+\delta+2\right)  \left(  \dfrac{\partial}{\partial x}-\dfrac{\partial}{\partial y}\right)  u
\]
when $a=b=0$. The relation \eqref{3} is obtained when $a=b=0$ since
\[
\left(  \mathcal{O}_{5,0}^{+}\mathcal{N}_{0,6}^{+}-\mathcal{N}_{0,1}\mathcal{O}_{3,0}\right)  u=\left(  2n_{3}+\gamma+\delta+1\right)  \left(\dfrac{\partial}{\partial z}-\dfrac{\partial}{\partial y}\right)  u.
\]
The relation given by $\mathcal{O}_{1,0}$ in Theorem \ref{Theorem 1} for $a=b=0$ gives (\ref{4}). Finally, for the last relation, it is enough to combine the first and the third relations.
\end{proof}

The derivatives of weighted versions of $P_{n_{1},n_{2},n_{3}}^{\left(\alpha,\beta,\gamma,\delta \right)  }(x,y,z)  $ verify the following sparse recurrence relations.
\begin{corollary}
The following relations hold true
\begin{multline}
\left(  2n_{2}+2n_{3}+\beta+\gamma+\delta+2\right)  \left(  \frac{\partial}{\partial x}-\frac{\partial}{\partial y}\right)  \left(  x^{\alpha}y^{\beta}z^{\gamma}w^{\delta}P_{n_{1},n_{2},n_{3}}^{\left(  \alpha,\beta,\gamma,\delta \right)  }(x,y,z)  \right) \\
=x^{\alpha-1}y^{\beta-1}z^{\gamma}w^{\delta}\left \{  \left(  n_{1}+\alpha \right)  \left(  n_{2}+1\right)  P_{n_{1},n_{2}+1,n_{3}}^{\left(\alpha-1,\beta-1,\gamma,\delta \right)  }(x,y,z)  \right. \\
\left.  -\left(  n_{1}+1\right)  \left(  n_{2}+\beta \right)  P_{n_{1}+1,n_{2},n_{3}}^{\left(  \alpha-1,\beta-1,\gamma,\delta \right)  }\left(x,y,z\right)  \right \}  ,
\end{multline}
\begin{multline}
\left(  2n_{3}+\gamma+\delta+1\right)  \left(  \frac{\partial}{\partial z}-\frac{\partial}{\partial y}\right)  \left(  x^{\alpha}y^{\beta}z^{\gamma}w^{\delta}P_{n_{1},n_{2},n_{3}}^{\left(  \alpha,\beta,\gamma,\delta \right)}(x,y,z)  \right) \\
=x^{\alpha}y^{\beta-1}z^{\gamma-1}w^{\delta}\left \{ - \left(  n_{2}+\beta \right)  \left(  n_{3}+1\right)  P_{n_{1},n_{2},n_{3}+1}^{\left(
\alpha,\beta-1,\gamma-1,\delta \right)  }(x,y,z)  \right. \\
\left.  +\left(  n_{2}+1\right)  \left(  n_{3}+\gamma \right) P_{n_{1},n_{2}+1,n_{3}}^{\left(  \alpha,\beta-1,\gamma-1,\delta \right)}(x,y,z)  \right \},
\end{multline}
\begin{equation}
\frac{\partial}{\partial z}\left(  x^{\alpha}y^{\beta}z^{\gamma}w^{\delta}P_{n_{1},n_{2},n_{3}}^{\left(  \alpha,\beta,\gamma,\delta \right)  }\left(
x,y,z\right)  \right)  =-x^{\alpha}y^{\beta}z^{\gamma-1}w^{\delta-1}\left(n_{3}+1\right)  P_{n_{1},n_{2},n_{3}+1}^{\left(  \alpha,\beta,\gamma
-1,\delta-1\right)  }(x,y,z),
\end{equation}
and
\begin{multline}
\left(  2n_{2}+2n_{3}+\beta+\gamma+\delta+2\right)  \frac{\partial}{\partial z}\left(  \frac{\partial}{\partial x}-\frac{\partial}{\partial y}\right)  \left(  x^{\alpha}y^{\beta
}z^{\gamma}w^{\delta}P_{n_{1},n_{2},n_{3}}^{\left(  \alpha,\beta,\gamma,\delta \right)  }(x,y,z)  \right) \\
=-x^{\alpha-1}y^{\beta-1}z^{\gamma-1}w^{\delta-1}\left \{  \left(  n_{1}+\alpha \right)  \left(  n_{2}+1\right)\left(  n_{3}+1\right)  P_{n_{1},n_{2}+1,n_{3}+1}^{\left(
\alpha-1,\beta-1,\gamma-1,\delta-1 \right)  }(x,y,z)  \right. \\
\left.  -\left(  n_{1}+1\right)  \left(  n_{2}+\beta \right)\left(  n_{3}+1\right)  P_{n_{1}+1,n_{2},n_{3}+1}^{\left(  \alpha-1,\beta-1,\gamma-1,\delta-1 \right)  }\left(
x,y,z\right)  \right \}.
\end{multline}
\end{corollary}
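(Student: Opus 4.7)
The plan is to prove each of the four relations by applying the product rule to the weighted polynomial $x^{\alpha}y^{\beta}z^{\gamma}w^{\delta}P_{n_{1},n_{2},n_{3}}^{(\alpha,\beta,\gamma,\delta)}(x,y,z)$ and then invoking the Rodrigues-type derivative identity
\[
\tfrac{d}{d\xi}\!\left[\xi^{b}(1-\xi)^{a}\widetilde{P}_{n}^{(a,b)}(\xi)\right]=-(n+1)\,\xi^{b-1}(1-\xi)^{a-1}\widetilde{P}_{n+1}^{(a-1,b-1)}(\xi),
\]
which is the integrated form of the sparse relation $\widetilde{\mathcal{L}}_{1,a,b}^{+}\widetilde{P}_{n}^{(a,b)}=(n+1)\widetilde{P}_{n+1}^{(a-1,b-1)}$ recorded in \secref{sec:2}. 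The four identities split into three natural types: the third is a one-dimensional Rodrigues identity purely in the variable $z$, the first and second are bivariate Rodrigues identities obtained by nesting two univariate ones, and the fourth is the composition $\partial_{z}\circ(\partial_{x}-\partial_{y})$ of the first and the third.

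I would begin with the third identity, which is the cleanest. Factor
\[
P_{n_{1},n_{2},n_{3}}^{(\alpha,\beta,\gamma,\delta)}(x,y,z)=\widetilde{Q}(x,y)\,(1-x-y)^{n_{3}}\widetilde{P}_{n_{3}}^{(\delta,\gamma)}\!\bigl(\tfrac{z}{1-x-y}\bigr),
\]
and observe that the $z$-independent factor $\widetilde{Q}(x,y)$ has Jacobi parameters $\gamma+\delta+2n_{3}+1$ and $\beta+\gamma+\delta+2n_{2}+2n_{3}+2$, both of which are invariant under the joint shift $(n_{3},\gamma,\delta)\mapsto(n_{3}+1,\gamma-1,\delta-1)$; hence $\widetilde{Q}$ coincides for the source and target polynomials of the third relation. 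Writing $\xi=z/(1-x-y)$ so that $z^{\gamma}w^{\delta}=(1-x-y)^{\gamma+\delta}\xi^{\gamma}(1-\xi)^{\delta}$ and using $\partial_{z}=(1-x-y)^{-1}\partial_{\xi}$, the Rodrigues-type identity above yields the third identity after restoring the external weights. For the first relation the plan is analogous: since $(\partial_{x}-\partial_{y})\xi=0$ and $(\partial_{x}-\partial_{y})(1-x-y)=0$, only the factor $x^{\alpha}y^{\beta}\widetilde{Q}(x,y)$ is affected; the product rule together with two nested applications of the Rodrigues-type formula --- first in the inner variable $\eta=y/(1-x)$ appearing in $\widetilde{P}_{n_{2}}^{(\gamma+\delta+2n_{3}+1,\beta)}$, then in the outer variable $x$ appearing in $\widetilde{P}_{n_{1}}^{(A,\alpha)}$ with $A=\beta+\gamma+\delta+2n_{2}+2n_{3}+2$ --- yields the claimed identity. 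The normalizing factor $K=2n_{2}+2n_{3}+\beta+\gamma+\delta+2$ and the two-term pattern with coefficients $(n_{1}+\alpha)(n_{2}+1)$ and $(n_{1}+1)(n_{2}+\beta)$ emerge from tracking how the first shift $n_{2}\mapsto n_{2}+1$ alters the outer parameter $A$ before the second differentiation. The second relation is established by the same iterated Rodrigues argument applied to the middle $y$-factor and the outer $z$-factor.

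The fourth identity follows by applying $\partial_{z}$ to both sides of the first, using $\partial_{z}(\partial_{x}-\partial_{y})=(\partial_{x}-\partial_{y})\partial_{z}$ on the left and invoking the third identity on each of the two weighted polynomials appearing on the right; the factor $(n_{3}+1)$ enters naturally from the third relation. The main obstacle will be the bookkeeping in the first and second identities: two successive Rodrigues-type differentiations initially produce polynomials whose $A$- and $B$-parameters are not manifestly of the form required by the target family, so one must either choose the order of differentiation carefully or employ the three-term recurrence for univariate shifted Jacobi polynomials to regroup the resulting expression into the precise two-term combination stated in the corollary. Once the first identity is secured, the second follows by the analogous argument with the roles of the middle and outer Jacobi factors interchanged, and the fourth is then immediate.
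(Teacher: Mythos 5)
Your plan is sound for the third relation and for deducing the fourth from the first and third, and there it coincides with the paper's argument: the paper proves the third relation from the single operator identity $\mathcal{O}_{1,0}^{+}u=-z^{1-\gamma}w^{1-\delta}\partial_{z}\left(z^{\gamma}w^{\delta}u\right)$, which is precisely your Rodrigues computation in the variable $z/(1-x-y)$, and it likewise obtains the fourth by combining the first and third. The gap is in the first and second relations. The identity you propose to iterate,
\[
\frac{d}{d\xi}\left[\xi^{b}(1-\xi)^{a}\widetilde{P}_{n}^{(a,b)}(\xi)\right]=-(n+1)\,\xi^{b-1}(1-\xi)^{a-1}\widetilde{P}_{n+1}^{(a-1,b-1)}(\xi),
\]
needs the full weight $\xi^{b}(1-\xi)^{a}$. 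For the middle Jacobi factor $\widetilde{P}_{n_{2}}^{(B,\beta)}(\eta)$ with $\eta=y/(1-x)$ and $B=\gamma+\delta+2n_{3}+1$, the weight $x^{\alpha}y^{\beta}z^{\gamma}w^{\delta}$ supplies only $\eta^{\beta}$; the complementary factor $(1-\eta)^{B}=\left((1-x-y)/(1-x)\right)^{B}$ is absent, so the ``inner Rodrigues step'' is not available --- differentiating $\eta^{\beta}\widetilde{P}_{n_{2}}^{(B,\beta)}(\eta)$ instead invokes $\widetilde{\mathcal{L}}_{6,\beta}$ and produces $\widetilde{P}_{n_{2}}^{(B+1,\beta-1)}$, a parameter shift that does not occur in the target family. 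Moreover, the right-hand sides of the first and second relations are genuine two-term combinations, and a single composition of two first-order ladder maps can only ever yield one scalar multiple of one polynomial; so no nesting of Rodrigues-type operations, in either order, can produce them. You sense this (``the main obstacle will be the bookkeeping'') and defer to a regrouping via the three-term recurrence, but that unperformed regrouping is exactly where the content of the proof lies.

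The paper closes this gap by a different mechanism: it writes the weighted derivative as a \emph{difference of two products} of the first-order operators already established in Theorem~\ref{Theorem 1}, namely
\[
\mathcal{N}_{1,0}^{+}\mathcal{N}_{0,3}^{+}-\mathcal{N}_{0,5}\mathcal{N}_{6,0}=-\left(2n_{2}+2n_{3}+\beta+\gamma+\delta+2\right)x^{1-\alpha}y^{1-\beta}w^{-\delta}\left(\frac{\partial}{\partial x}-\frac{\partial}{\partial y}\right)\left(x^{\alpha}y^{\beta}w^{\delta}\,\cdot\right)
\]
for the first relation, and $\mathcal{O}_{5,0}\mathcal{N}_{0,6}-\mathcal{N}_{0,1}^{+}\mathcal{O}_{3,0}^{+}$ for the second; each of the two operator products then contributes exactly one of the two terms by Theorem~\ref{Theorem 1} with $a=b=0$. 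To complete your argument you would need to discover these (or equivalent) factorizations, since the raw product-rule expansion yields them only after a recombination that your outline does not carry out.
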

\begin{proof}
The first relation is a consequence of
\begin{multline*}
\left(  \mathcal{N}_{1,0}^{+}\mathcal{N}_{0,3}^{+}-\mathcal{N}_{0,5}\mathcal{N}_{6,0}\right)  u  =\left(  2n_{2}+2n_{3}+\beta+\gamma
+\delta+2\right)  \left(  xy\left(  \frac{\partial}{\partial y}-\frac{\partial}{\partial x}\right)  +\beta x-\alpha y\right)  u\\
=-\left(  2n_{2}+2n_{3}+\beta+\gamma+\delta+2\right)  x^{1-\alpha}y^{1-\beta}w^{-\delta}\left(  \frac{\partial}{\partial x}-\frac{\partial}{\partial y}\right)  \left(  x^{\alpha}y^{\beta}w^{\delta}u\right)  .
\end{multline*}
The second one comes from the relation
\begin{multline*}
\left(  \mathcal{O}_{5,0}\mathcal{N}_{0,6}-\mathcal{N}_{0,1}^{+}\mathcal{O}_{3,0}^{+}\right)  u  =\left(  2n_{3}+\gamma+\delta+1\right)  \left(  yz\left(  \frac{\partial}{\partial y}-\frac{\partial}{\partial z}\right)  +\beta z-\gamma y\right)  u\\
=-\left(  2n_{3}+\gamma+\delta+1\right)  y^{1-\beta}z^{1-\gamma}w^{-\delta}\left(  \frac{\partial}{\partial z}-\frac{\partial}{\partial y}\right)\left(  y^{\beta}z^{\gamma}w^{\delta}u\right)  .
\end{multline*}
The third relation holds from the relation
\[
\mathcal{O}_{1,0}^{+}u=\left(  z\delta-w\gamma \right)  u-zw\frac{\partial u}{\partial z}=-z^{1-\gamma}w^{1-\delta}\frac{\partial}{\partial z}\left( z^{\gamma}w^{\delta}u\right).
\]
By taking into account the first and the third relations, we obtain the last relation.
\end{proof}

It is possible to write $xP_{n_{1},n_{2},n_{3}}^{\left(  \alpha,\beta,\gamma,\delta \right)  }(x,y,z)$, $yP_{n_{1},n_{2},n_{3}}^{\left(  \alpha,\beta,\gamma,\delta \right)  }(x,y,z)$, $zP_{n_{1},n_{2},n_{3}}^{\left(  \alpha,\beta,\gamma,\delta \right)  }\left(x,y,z\right) $, and $wP_{n_{1},n_{2},n_{3}}^{\left(  \alpha,\beta,\gamma,\delta \right)  }\left(x,y,z\right)  $  in terms of Koornwinder polynomials with different parameters as follows.

\begin{corollary}
The following relations are satisfied
\begin{multline}
\left(  2n+\alpha+\beta+\gamma+\delta+3\right)  xP_{n_{1},n_{2},n_{3}}^{\left(  \alpha,\beta,\gamma,\delta \right)  }(x,y,z)
=\left(  n_{1}+\alpha \right)  P_{n_{1},n_{2},n_{3}}^{\left(  \alpha-1,\beta,\gamma,\delta \right)  }(x,y,z)  \\
 +\left(  n_{1}+1\right)  P_{n_{1}+1,n_{2},n_{3}}^{\left(  \alpha-1,\beta,\gamma,\delta \right)  }(x,y,z)  ,
\end{multline}
\begin{multline}
\left(  2n+\alpha+\beta+\gamma+\delta+3\right)  \left(  2n_{2}+2n_{3}+\beta+\gamma+\delta+2\right)  yP_{n_{1},n_{2},n_{3}}^{\left(  \alpha,\beta,\gamma,\delta \right)  }(x,y,z)  \\
=\left(  n+n_{2}+n_{3}+\beta+\gamma+\delta+2\right)  \left(  n_{2}+\beta \right)  P_{n_{1},n_{2},n_{3}}^{\left(  \alpha,\beta-1,\gamma,\delta \right)  }(x,y,z)  \\
+\left(  n+n_{2}+n_{3}+\alpha+\beta+\gamma+\delta+3\right)  \left(n_{2}+1\right)  P_{n_{1},n_{2}+1,n_{3}}^{\left(  \alpha,\beta-1,\gamma,\delta \right)  }(x,y,z)  \\
-\left(  n_{1}+1\right)  \left(  n_{2}+\beta \right)  P_{n_{1}+1,n_{2},n_{3}}^{\left(  \alpha,\beta-1,\gamma,\delta \right)  }(x,y,z)
-\left(  n_{1}+\alpha \right)  \left(  n_{2}+1\right)  P_{n_{1}-1,n_{2}+1,n_{3}}^{\left(  \alpha,\beta-1,\gamma,\delta \right)  }(x,y,z),
\end{multline}
\begin{multline}
\left(  2n+\alpha+\beta+\gamma+\delta+3\right)  \left(  2n_{2}+2n_{3}+\beta+\gamma+\delta+2\right)  \left(  \gamma+\delta+2n_{3}+1\right)  zP_{n_{1},n_{2},n_{3}}^{\left(  \alpha,\beta,\gamma,\delta \right)  }\left(x,y,z\right)  \\
=\left(  n+n_{2}+n_{3}+\beta+\gamma+\delta+2\right)  \left(  n_{2}+2n_{3}+\gamma+\delta+1\right)  \left(  n_{3}+\gamma \right)  P_{n_{1},n_{2},n_{3}}^{\left(  \alpha,\beta,\gamma-1,\delta \right)  }\left(x,y,z\right)  \\
-\left(  n_{1}+1\right)  \left(  n_{2}+2n_{3}+\gamma+\delta+1\right)\left(  n_{3}+\gamma \right)  P_{n_{1}+1,n_{2},n_{3}}^{\left(  \alpha,\beta,\gamma-1,\delta \right)  }(x,y,z)  \\
-\left(  n+n_{2}+n_{3}+\alpha+\beta+\gamma+\delta+3\right)  \left(n_{2}+1\right)  \left(  n_{3}+\gamma \right)  P_{n_{1},n_{2}+1,n_{3}}^{\left(
\alpha,\beta,\gamma-1,\delta \right)  }(x,y,z)  \\
+\left(  n_{1}+\alpha \right)  \left(  n_{2}+1\right)  \left(  n_{3}+\gamma \right)  P_{n_{1}-1,n_{2}+1,n_{3}}^{\left(  \alpha,\beta,\gamma-1,\delta \right)  }(x,y,z)  \\
+\left(  n+n_{2}+n_{3}+\alpha+\beta+\gamma+\delta+3\right)  \left(n_{2}+2n_{3}+\beta+\gamma+\delta+2\right)  \left(  n_{3}+1\right)P_{n_{1},n_{2},n_{3}+1}^{\left(  \alpha,\beta,\gamma-1,\delta \right)  }\left(x,y,z\right)  \\
-\left(  n_{1}+\alpha \right)  \left(  n_{2}+2n_{3}+\beta+\gamma+\delta+2\right)  \left(  n_{3}+1\right)  P_{n_{1}-1,n_{2},n_{3}+1}^{\left(\alpha,\beta,\gamma-1,\delta \right)  }(x,y,z)  \\
-\left(  n+n_{2}+n_{3}+\beta+\gamma+\delta+2\right)  \left(  n_{2}+\beta \right)  \left(  n_{3}+1\right)  P_{n_{1},n_{2}-1,n_{3}+1}^{\left(\alpha,\beta,\gamma-1,\delta \right)  }(x,y,z)  \\
+\left(  n_{1}+1\right)  \left(  n_{2}+\beta \right)  \left(  n_{3}+1\right)  P_{n_{1}+1,n_{2}-1,n_{3}+1}^{\left(  \alpha,\beta,\gamma-1,\delta \right)  }(x,y,z),
\end{multline}
and
\begin{multline}
\left(  2n+\alpha+\beta+\gamma+\delta+3\right)  \left(  2n_{2}+2n_{3}+\beta+\gamma+\delta+2\right)  \left(  \gamma+\delta+2n_{3}+1\right)wP_{n_{1},n_{2},n_{3}}^{\left(  \alpha,\beta,\gamma,\delta \right)  }\left(x,y,z\right)  \\
=-\left(  n+n_{2}+n_{3}+\alpha+\beta+\gamma+\delta+3\right)  \left(n_{2}+2n_{3}+\beta+\gamma+\delta+2\right)  \left(  n_{3}+1\right)
P_{n_{1},n_{2},n_{3}+1}^{\left(  \alpha,\beta,\gamma,\delta-1\right)  }\left(x,y,z\right)  \\
+\left(  n_{1}+\alpha \right)  \left(  n_{2}+2n_{3}+\beta+\gamma+\delta+2\right)  \left(  n_{3}+1\right)  P_{n_{1}-1,n_{2},n_{3}+1}^{\left(\alpha,\beta,\gamma,\delta-1\right)  }(x,y,z)  \\
+\left(  n+n_{2}+n_{3}+\beta+\gamma+\delta+2\right)  \left(  n_{2}+\beta \right)  \left(  n_{3}+1\right)  P_{n_{1},n_{2}-1,n_{3}+1}^{\left(\alpha,\beta,\gamma,\delta-1\right)  }(x,y,z)  \\
-\left(  n_{1}+1\right)  \left(  n_{2}+\beta \right)  \left(  n_{3}+1\right)  P_{n_{1}+1,n_{2}-1,n_{3}+1}^{\left(  \alpha,\beta,\gamma,\delta-1\right)  }(x,y,z)  \\
+\left(  n+n_{2}+n_{3}+\beta+\gamma+\delta+2\right)  \left(  n_{2}+2n_{3}+\gamma+\delta+1\right)  \left(  n_{3}+\delta \right)  P_{n_{1},n_{2},n_{3}}^{\left(  \alpha,\beta,\gamma,\delta-1\right)  }\left(x,y,z\right)  \\
-\left(  n_{1}+1\right)  \left(  n_{2}+2n_{3}+\gamma+\delta+1\right)\left(  n_{3}+\delta \right)  P_{n_{1}+1,n_{2},n_{3}}^{\left(  \alpha,\beta,\gamma,\delta-1\right)  }(x,y,z)  \\
-\left(  n+n_{2}+n_{3}+\alpha+\beta+\gamma+\delta+3\right)  \left(n_{2}+1\right)  \left(  n_{3}+\delta \right)  P_{n_{1},n_{2}+1,n_{3}}^{\left(\alpha,\beta,\gamma,\delta-1\right)  }(x,y,z)  \\
+\left(  n_{1}+\alpha \right)  \left(  n_{2}+1\right)  \left(  n_{3}+\delta \right)  P_{n_{1}-1,n_{2}+1,n_{3}}^{\left(  \alpha,\beta,\gamma,\delta-1\right)  }(x,y,z)  .
\end{multline}
\end{corollary}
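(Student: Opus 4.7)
My plan is to reduce each of the four identities to a systematic application of two \emph{multiplication} structural relations and one well-known \emph{backward connection} for the univariate shifted Jacobi polynomials, applied at the appropriate level of the product representation
\[
P^{(\alpha,\beta,\gamma,\delta)}_{n_1,n_2,n_3}(x,y,z) = \widetilde{P}_{n_1}^{(A,\alpha)}(x)(1-x)^{n_2}\widetilde{P}_{n_2}^{(B,\beta)}\bigl(\tfrac{y}{1-x}\bigr)(1-x-y)^{n_3}\widetilde{P}_{n_3}^{(\delta,\gamma)}\bigl(\tfrac{z}{1-x-y}\bigr),
\]
with $A=\beta+\gamma+\delta+2n_2+2n_3+2$ and $B=\gamma+\delta+2n_3+1$, followed by propagation of any extra $(1-x)$ or $(1-x-y)$ factors upward.

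The foundation consists of three shifted-Jacobi identities. The first two follow from the operators of \secref{sec:2}: combining $\widetilde{\mathcal{L}}_{3,b,n}^{+}+\widetilde{\mathcal{L}}_{5,a,b,n}=t(2n+a+b+1)\operatorname{Id}$ and $\widetilde{\mathcal{L}}_{2,a,n}^{+}-\widetilde{\mathcal{L}}_{4,a,b,n}=(1-t)(2n+a+b+1)\operatorname{Id}$, applied to $\widetilde{P}_n^{(a,b)}(t)$, give
\begin{align*}
(2n+a+b+1)\,t\,\widetilde{P}_n^{(a,b)}(t) &= (n+b)\widetilde{P}_n^{(a,b-1)}(t)+(n+1)\widetilde{P}_{n+1}^{(a,b-1)}(t),\\
(2n+a+b+1)(1-t)\widetilde{P}_n^{(a,b)}(t) &= (n+a)\widetilde{P}_n^{(a-1,b)}(t)-(n+1)\widetilde{P}_{n+1}^{(a-1,b)}(t).
\end{align*}
The third, the standard Jacobi connection formula
\[
(2n+a+b+1)\widetilde{P}_n^{(a,b)}(t)=(n+a+b+1)\widetilde{P}_n^{(a+1,b)}(t)-(n+b)\widetilde{P}_{n-1}^{(a+1,b)}(t),
\]
is classical and can be verified directly or by combining the second identity at $a\mapsto a+1$ with the three-term recurrence.

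The first identity ($xP$) falls out immediately on applying the multiplication-by-$t$ identity to $\widetilde{P}_{n_1}^{(A,\alpha)}(x)$: since $A$ is independent of $\alpha$, the two resulting polynomials are the first factors of $P^{(\alpha-1,\beta,\gamma,\delta)}_{n_1,n_2,n_3}$ and $P^{(\alpha-1,\beta,\gamma,\delta)}_{n_1+1,n_2,n_3}$, and the prefactor $2n_1+A+\alpha+1$ equals $2n+\alpha+\beta+\gamma+\delta+3$. For the second identity ($yP$), I would write $y=(1-x)\bigl(y/(1-x)\bigr)$ and apply the multiplication-by-$t$ identity to the middle factor, splitting $yP$ into branches $m_2\in\{n_2,n_2+1\}$, each carrying an extra $(1-x)$. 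In the $m_2=n_2$ branch the target first-factor parameter is $A-1$, so I would apply the multiplication-by-$(1-x)$ identity to $\widetilde{P}_{n_1}^{(A,\alpha)}$, producing $P^{(\alpha,\beta-1,\gamma,\delta)}_{n_1,n_2,n_3}$ and $P^{(\alpha,\beta-1,\gamma,\delta)}_{n_1+1,n_2,n_3}$. In the $m_2=n_2+1$ branch the target parameter is $A+1$ while $(1-x)^{n_2+1}$ is already correct, so I would apply the backward connection to $\widetilde{P}_{n_1}^{(A,\alpha)}$, producing $P^{(\alpha,\beta-1,\gamma,\delta)}_{n_1,n_2+1,n_3}$ and $P^{(\alpha,\beta-1,\gamma,\delta)}_{n_1-1,n_2+1,n_3}$. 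Using the simplifications $n_1+A=n+n_2+n_3+\beta+\gamma+\delta+2$ and $n_1+A+\alpha+1=n+n_2+n_3+\alpha+\beta+\gamma+\delta+3$, and multiplying through by $(2n+\alpha+\beta+\gamma+\delta+3)(2n_2+2n_3+\beta+\gamma+\delta+2)$, the four collected terms reproduce the stated formula; the two minus signs come from the $(1-t)$ identity and the backward connection.

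The identities for $zP$ and $wP$ follow by the same three-step strategy starting at the bottom factor $\widetilde{P}_{n_3}^{(\delta,\gamma)}(z/(1-x-y))$: apply the multiplication-by-$t$ identity (for $zP$) or the multiplication-by-$(1-t)$ identity (for $wP$), then propagate the resulting $(1-x-y)$ through \emph{both} the middle and the first factor via the $(1-t)$- and backward-connection identities in each of the branches $m_3\in\{n_3,n_3+1\}$. Each of the three levels contributes two possibilities, producing the $2\times 2\times 2=8$ terms of the statement. The main technical obstacle will be precisely this three-level bookkeeping for $wP$: I must track which branches carry a minus sign (the $(1-t)$ identity and the backward connection each contribute one at each level where applied) and verify that the eight coefficients arise as the correct triple products of linear factors from the three levels, with simplifications of the form $n_i+(\text{intermediate parameter})=(\text{stated coefficient})$ closing the loop at each level.
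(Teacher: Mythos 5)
Your plan is correct, and I verified that it reproduces all the stated coefficients and signs (e.g.\ for the $y$-identity the four terms arise exactly as you describe, with $n_1+A=n+n_2+n_3+\beta+\gamma+\delta+2$ and $n_1+A+\alpha+1=n+n_2+n_3+\alpha+\beta+\gamma+\delta+3$; for the $z$- and $w$-identities the eight terms come out of the $2\times2\times2$ branching with the correct sign pattern). The paper's proof is the multivariate-operator version of the same computation: it uses the combinations $\mathcal{N}_{3,0}^{+}+\mathcal{N}_{5,0}=(2n+e+3)x\,\mathrm{Id}$, $\mathcal{N}_{2,0}^{+}-\mathcal{N}_{4,0}=(2n+e+3)(1-x)\,\mathrm{Id}$, $\mathcal{N}_{2,0}-\mathcal{N}_{4,0}^{+}=(2n+e+3)\,\mathrm{Id}$ and their $\mathcal{N}_{0,i}$, $\mathcal{O}_{i,0}$ analogues, then reads off the coefficients from the ladder relations of Theorem~\ref{Theorem 1}. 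Your three univariate identities are precisely the one-variable shadows of these operator combinations (indeed $\widetilde{\mathcal{L}}_{2,a,b,n}-\widetilde{\mathcal{L}}_{4,n}^{+}=(2n+a+b+1)\,\mathrm{Id}$ is your ``backward connection''), so the mathematical content is the same; what differs is the bookkeeping. The paper's formalism handles the powers of $(1-x)$ and $(1-x-y)$ automatically because the operators already act on the full product, at the price of having to guess the right compositions such as $\bigl(\mathcal{O}_{3,0}^{+}\mathcal{N}_{0,2}^{+}-\mathcal{O}_{5,0}\mathcal{N}_{0,4}^{+}\bigr)\bigl(\mathcal{N}_{2,0}^{+}-\mathcal{N}_{4,0}\bigr)-\bigl(\mathcal{O}_{3,0}^{+}\mathcal{N}_{0,4}-\mathcal{O}_{5,0}\mathcal{N}_{0,2}\bigr)\bigl(\mathcal{N}_{2,0}-\mathcal{N}_{4,0}^{+}\bigr)$; your version is more elementary and self-contained (only three classical contiguous relations are needed), but you must track by hand which branch carries an extra $(1-x)$ or $(1-x-y)$ and therefore which of the two propagation identities to apply at the next level up --- the one point where your sketch is slightly loose, though the dichotomy (extra factor present $\Rightarrow$ multiplication-by-$(1-t)$ identity; absorbed into the raised power $\Rightarrow$ backward connection) resolves it unambiguously in every branch.
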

\begin{proof}
The first relation follows from $\left(  \mathcal{N}_{3,0}^{+}+\mathcal{N}_{5,0}\right)  u=\left(  2n+\alpha+\beta+\gamma+\delta+3\right)  xu$. The second relation is satisfied by the equality
\begin{multline*}
\left(  2n+\alpha+\beta+\gamma+\delta+3\right)  \left(  2n_{2}+2n_{3}+\beta+\gamma+\delta+2\right)  yu\\
=\mathcal{N}_{0,3}^{+}\left(  \mathcal{N}_{2,0}^{+}-\mathcal{N}_{4,0}\right)  u+\mathcal{N}_{0,5}\left(  \mathcal{N}_{2,0}-\mathcal{N}_{4,0}^{+}\right)  u.
\end{multline*}
The third relation comes from
\begin{multline*}
\left(  2n+\alpha+\beta+\gamma+\delta+3\right)  \left(  2n_{2}+2n_{3}+\beta+\gamma+\delta+2\right)  \left(  \gamma+\delta+2n_{3}+1\right)  zu\\
=\left(  \mathcal{O}_{3,0}^{+}\mathcal{N}_{0,2}^{+}-\mathcal{O}_{5,0}\mathcal{N}_{0,4}^{+}\right)  \left(  \mathcal{N}_{2,0}^{+}-\mathcal{N}_{4,0}\right)  u-\left(  \mathcal{O}_{3,0}^{+}\mathcal{N}_{0,4}-\mathcal{O}_{5,0}\mathcal{N}_{0,2}\right)  \left(  \mathcal{N}_{2,0}-\mathcal{N}_{4,0}^{+}\right)  u.
\end{multline*}
The last one is obtained from the relation
\begin{multline*}
\left(  2n+\alpha+\beta+\gamma+\delta+3\right)  \left(  2n_{2}+2n_{3}+\beta+\gamma+\delta+2\right)  \left(  \gamma+\delta+2n_{3}+1\right)  wu\\
=\left(  \mathcal{O}_{4,0}\mathcal{N}_{0,4}^{+}+\mathcal{O}_{2,0}^{+}\mathcal{N}_{0,2}^{+}\right)  \left(  \mathcal{N}_{2,0}^{+}-\mathcal{N}_{4,0}\right)  u-\left(  \mathcal{O}_{4,0}\mathcal{N}_{0,2}+\mathcal{O}_{2,0}^{+}\mathcal{N}_{0,4}\right)  \left(  \mathcal{N}_{2,0}-\mathcal{N}_{4,0}^{+}\right)  u.
\end{multline*}

\end{proof}

\section*{Acknowledgements}

The work of the first author has been partially supported by The Scientific and Technological Research Council of Turkey. The work of the second author has been partially supported by the Agencia Estatal de Innovaci\'on (AEI) of Spain under Grant MTM2016-75140-P, cofinanced by the European Community fund FEDER.

\end{document}